\documentclass[10pt,twoside,a4paper,english,reqno]{amsart}
\usepackage{amscd}
\usepackage{caption}
\usepackage{a4wide}
\usepackage{multirow}
\usepackage{booktabs}

\usepackage{amssymb,amsthm,amsmath,amsfonts,mathrsfs}
\usepackage{bm,latexsym,enumitem,dsfont,tabularx,graphicx,url}
\usepackage[utf8]{inputenc}
\usepackage{hyperref}
\usepackage{stmaryrd}
\hypersetup{
    colorlinks=true,
    linkcolor=blue,
    filecolor=red,
    urlcolor=blue,
    citecolor=red,
}
\usepackage{nicefrac,microtype,upref,ulem,doi,footmisc}

\theoremstyle{plain}
\newtheorem{theorem}{Theorem}[section]

\newtheorem{lemma}[theorem]{Lemma}

\newtheorem{remark}[theorem]{Remark}
\newtheorem{assumption}[theorem]{Assumption}

\numberwithin{equation}{section}

\usepackage{tikz,tikzscale}
\usepackage{pgfplots}
\pgfplotsset{compat=1.13}

\usepackage{xcolor}
\definecolor{lateksii_color}{RGB}{155,0,119}

\title[HDG for the Ostrovsky equation]{A hybridizable discontinuous Galerkin method for the Ostrovsky equation}
\author{Mukul Dwivedi \and Andreas Rupp}
\address{Department of Mathematics, Saarland University, Saarbrücken, Germany}
\email{\{mukul.dwivedi;andreas.rupp\}@uni-saarland.de}
\thanks{}

\subjclass[2020]{35Q53, 35G25, 65M12, 35L05}
\keywords{Ostrovsky equation, HDG method, Coriolis effect, Energy stability, Error analysis}

\begin{document}

\begin{abstract}
This paper develops the hybridizable discontinuous Galerkin (HDG) method for the Ostrovsky equation, a nonlinear dispersive wave equation featuring both third-order dispersion and a nonlocal antiderivative term with Coriolis effect. On a bounded interval, the nonlocal operator $\partial_x^{-1}$ is localized through an auxiliary variable $v$ satisfying $v_x=u$ together with an additional boundary constraint that ensures uniqueness.
We employ a mixed first-order formulation to decompose the dispersive operator and to localize the nonlocal term, and we couple the resulting semi-discrete HDG scheme with a $\theta$-time stepping method for $\theta \in [1/2,1]$.  We prove $L^2$-stability for suitable stabilization parameters and derive an {\it a priori} $L^2(\Omega)$ error estimate for smooth solutions that explicitly accounts for the nonlinear convective flux.
 Numerical examples illustrate the convergence properties and demonstrate the scheme's capability to handle smooth and non-smooth solutions, including solitary wave propagation and peaked solitary wave (peakon) propagation in the zero dispersive limit regime. 
\end{abstract}

\maketitle

\section{Introduction}

We study the Ostrovsky equation (also known as the rotation-modified Korteweg--de Vries (KdV) equation) 
\begin{equation}\label{eq:ostrovsky}
\begin{cases}
    \partial_t u - \beta \partial_x^3 u  + f(u)_x - \gamma \partial_x^{-1} u = 0, \quad &x \in \mathbb{R},\; t>0,\\
    u(x,0)=u_0(x), \qquad &x\in \mathbb R,
\end{cases}
\end{equation}
where $f(u) = \frac{\alpha}{2} u^2$, $u$ is a real valued function, $\beta \in \mathbb R$, $\beta\neq 0$, $\gamma>0$ and $\alpha>0$, and the nonlocal antiderivative term \(\partial_x^{-1}\) can be defined by
\(
(\partial_x^{-1} g)(x):=\int_{+\infty}^{x} g(s)\,\mathrm ds,
\)
for functions $g$ that decay as $x\to +\infty$. 
The Ostrovsky equation \eqref{eq:ostrovsky} was originally derived as an asymptotic model for weakly nonlinear long surface waves in a rotating frame of reference, where the Coriolis effect introduces a nonlocal restoring term. It was first proposed by Ostrovsky \cite{ostrovsky1978nonlinear} in the context of oceanic wave propagation; see also \cite{redekopp1983nonlinear} for additional derivations and physical discussion. The equation captures the balance between nonlinear steepening, high-frequency dispersion (when \(\beta \neq 0\)), and the nonlocal effect of background rotation through the term \(\gamma \partial_x^{-1}u\). Here \(\gamma\) measures the strength of the Coriolis effect \cite{ostrovsky1978nonlinear}, while \(\beta\) determines the type of dispersion \cite{gui2007cauchy,levandosky2006stability,linares2006local,liu2007stability,yan2018cauchy}. This nonlocal operator introduces significant mathematical and computational challenges, distinguishing \eqref{eq:ostrovsky} from purely local dispersive models such as the KdV equation \cite{kenig1991well}, which is recovered when \(\gamma = 0\). In the limit \(\beta \to 0\), the equation reduces to the Ostrovsky--Hunter (OH) or Vakhnenko equation \cite{coclite2014convergence,coclite2015dispersive,coclite2020solutions}, which models short-wave perturbations in relaxing media and bubbly liquids and exhibits wave breaking and singular solitons such as peakons, cuspons, and loop solitons \cite{grimshaw2012reduced, liu2010wave,vakhnenko1992solitons,vakhnenko1998two}.

The existence and stability of solitary wave solutions for the Ostrovsky equation \eqref{eq:ostrovsky} has been an interesting topic of research, motivated by its structural similarities with the integrable KdV and Kadomtsev–Petviashvili (KP) equations \cite{KadomtsevPetviashvili1970}. Extensive analytical and numerical investigations have been devoted to this problem in  \cite{chen2001analytical}. For the full dispersion case (\(\beta \neq 0\)), the existence and stability properties depend critically on both parameters \(\beta\) and \(\gamma\). When \(\beta > 0\), solitary waves exist for phase speeds $c_w$ satisfying \(c_w < 2\sqrt{\gamma\beta}\), with the set of ground states being stable for certain values of \(c_w\) \cite{liu2004stability}. Conversely, for \(\beta < 0\), no solitary wave solutions exist when the phase speed $c_w$ satisfies \(c_w < \sqrt{140\gamma|\beta|}\) \cite{liu2004stability}. Since explicit closed-form expressions for these solitary waves are generally unavailable, numerical methods become essential for their approximation and for studying dynamical behaviors such as the persistence of a perturbed KdV soliton under the influence of weak rotation (small \(\gamma\)).

Analytically, the Cauchy problem for \eqref{eq:ostrovsky} has been extensively studied. Local and global well-posedness in the anisotropic Sobolev spaces \(X_s = \{ f \in H^s(\mathbb{R}) : \partial_x^{-1}f \in L^2(\mathbb{R}) \}\) for \(s > \frac{3}{4}\) were established by Linares and Milanes \cite{linares2006local} using energy methods and dispersive estimates. More recent works have extended these results to lower regularity and provided refined stability analyses for solitary waves \cite{coclite2020solutions,gui2007cauchy,wang2018cauchy, yan2018cauchy,yan2019white}. The existence and stability of solitary waves crucially depend on the sign of \(\beta\gamma\): for \(\beta\gamma > 0\), solitary waves exist and are stable, while for \(\beta\gamma < 0\) they may be unstable or non-existent \cite{levandosky2006stability, levandosky2007stability, liu2007stability, liu2004stability}. Very recently, the spectral stability of constrained solitary waves for a generalized Ostrovsky equation, with $\beta>0$ and $\gamma<0$, has been investigated in \cite{han2024spectral}, providing new insights into the stability properties under additional constraints. Equation \eqref{eq:ostrovsky} also possesses at least three conserved quantities \cite{gui2007cauchy}, namely
\begin{align}\label{energy}
  E(u)&= \int_{\mathbb R} u^2 \,dx  ,\\
   V(u)&=  \int_{\mathbb R} \big(\frac{1}{3}u^3 + \frac{\gamma}{2}(\partial_x^{-1}u)^2 + \beta(\partial_xu)^2 \big)dx,\\
   I(u)&= \int_{\mathbb R} u \,dx.
\end{align}
The limit \(\beta \to 0\) (convergence to the OH equation) has been rigorously justified in \cite{levandosky2007stability, liu2010wave}, revealing a rich structure of singular traveling waves and wave breaking phenomena, and convergence of solutions in the limit ($\beta \to 0$ and $\gamma\to 0$) to the Burgers equation in \cite{coclite2015dispersive}.

Numerical approximation of the full Ostrovsky equation ($\beta \neq 0$) is particularly challenging due to the interplay of third-order dispersion, nonlinearity, and the nonlocal antiderivative term.  Standard finite difference, finite element, and spectral methods often struggle to handle both the high-order derivative and the nonlocal operator simultaneously without inducing spurious oscillations or loss of stability. As noted by Kawai et al. \cite{kawai2025mathematical}, numerical methods for the full equation are still under development, and comprehensive mathematical analyses remain scarce. For the reduced Ostrovsky--Hunter equation ($\beta = 0$), several specialized schemes have been proposed, including finite difference methods \cite{coclite2017convergent, ridder2019convergent} and discontinuous Galerkin (DG) methods \cite{zhang2020discontinuous}. For the KdV equation ($\gamma = 0$), various numerical methods with extensive analysis has been studied as well, see \cite{Dong2017,dwivedi2025convergence,dwivedi2026analysis,holden1999operator,Yan2011} and references therein. 
Some numerical schemes for the Ostrovsky equation \eqref{eq:ostrovsky} have been developed in recent years. First finite difference scheme proposed in \cite{hunter1990dispersive}, however, the complete detail was not described to investigate solutions under both positive and negative dispersion effects. In \cite{grimshaw1998terminal}, Fourier–Galerkin scheme and in \cite{chen2001analytical}, Fourier-pseudo-spectral scheme are used to examine the evolutions of soliton-like solutions in a periodic setting. The numerical integration of \eqref{eq:ostrovsky} based on its geometric structures given in \cite{MiyatakeYaguchiMatsuo2012}. Recent efforts for the full equation on a periodic domain include some conservative numerical schemes \cite{yaguchi2010conservative}, a norm-conservative finite difference scheme with  error estimates  \cite{kawai2025mathematical}, and a geometric integration approach via the scalar auxiliary variable (SAV) method \cite{sato2025geometric}, both providing structure-preserving properties. The DG framework, with its flexibility in handling higher-order derivatives and nonstandard operators, offers a promising alternative. However, traditional DG methods typically result in a large number of globally coupled degrees of freedom \cite{CockburnGopalakrishnanLazarov2009}, which motivates the development of more efficient alternatives such as the hybridizable discontinuous Galerkin (HDG) method.

The hybridizable discontinuous Galerkin (HDG) method, introduced by Cockburn et al. \cite{CockburnGopalakrishnanLazarov2009}, addresses this issue by introducing hybrid variables on the mesh skeleton, allowing for local elimination of interior unknowns and significant reduction of the global system size. Since its inception, the HDG method has been successfully applied to a wide range of problems, including compressible flows \cite{PeraireNguyenCockburn2010}, linear elasticity \cite{WangHuangQiu2019}, and KdV type equations \cite{ChenCockburnDong2016,ChenDongJiang2018,Dong2017,Samii2016}. The method has also been extended to Maxwell's equations \cite{chen2017superconvergent}, nonlinear diffusion with internal jumps \cite{Musch2023hdg}, the non--local Camassa--Holm--Kadomtsev--Petviashvili equation \cite{dwivedi2026hybridizable}, and the Stokes equation with efficient multigrid solvers \cite{LuWangKanschatRupp2024}. Recent advancements include reduced stabilization techniques \cite{oikawa2015hybridized} and localized orthogonal decomposition strategies \cite{LuMaierRupp2025}, further enhancing the method's efficiency and applicability. Despite these developments, an HDG method for the full Ostrovsky equation \eqref{eq:ostrovsky} has not been previously proposed. We adopt an HDG discretization because it preserves the locality and conservation structure of DG methods
while reducing globally coupled degrees of freedom to trace unknowns on the mesh skeleton via static condensation,
which is particularly attractive for dispersive systems with multiple auxiliary variables \cite{ChenCockburnDong2016,CockburnGopalakrishnanLazarov2009}.

In this work, we develop the first high-order HDG method for the Ostrovsky equation \eqref{eq:ostrovsky} on a bounded interval \(\Omega = (x_L, x_R)\) with appropriate boundary conditions to ensure the well posedness. We decompose the Ostrovsky equation \eqref{eq:ostrovsky} into the first order system by introducing auxiliary variables.  The nonlocal term is localized through the auxiliary variable \(v\) by setting $v_x = u$ so that $v_h$ approximates $\partial_x^{-1}u$, which also satisfies an additional ordinary differential equation and an additional boundary constraint is given for the uniqueness of the operator $\partial_x^{-1}$ in a bounded domain. The signs of \(\beta\) and \(\gamma\) play a crucial role in the design of the numerical traces and the imposition of boundary conditions. 

Our HDG formulation employs polynomial spaces of degree \(k\) on a one-dimensional mesh and introduces carefully designed numerical traces on element interfaces with some stabilization parameters independent of $h$, the mesh discretization parameter. The global HDG scheme is obtained by assembling the local formulations and imposing transmission conditions weakly. The method is shown stable and error estimates are obtained for the nonlinear flux. In particular, we derive an error estimate of order \(O(h^{k+1/2})\) in the \(L^2\)-norm for the smooth solution \(u\). This convergence rate is typical for nonlinear dispersive equations and matches the best-known results
for DG methods applied to KdV-type equations with nonlinear flux; see, e.g., \cite{XuShu2007,zhang2020discontinuous}.
For HDG methods, existing analyses cover solvers for nonlinear KdV equations \cite{Samii2016}
and optimally convergent formulations for KdV-type models in linearized settings \cite{ChenCockburnDong2016, ChenDongJiang2018,Dong2017}.
The present work develops a stability and error analysis for the full Ostrovsky equation \eqref{eq:ostrovsky}, in which the
nonlinear convective flux must be controlled simultaneously with a nonlocal antiderivative (Coriolis) term. This combination, together with the bounded-interval boundary constraint required for uniqueness of $\partial_x^{-1}$, has not been addressed in prior HDG analyses to our knowledge.

In summary, the main contributions of this paper are:
\begin{itemize}
  \item We propose the first HDG spatial discretization for the full Ostrovsky equation \eqref{eq:ostrovsky}, simultaneously treating the third-order dispersive term and the nonlocal antiderivative term. For time integration we employ an implicit $\theta$-scheme where $\theta \in [\frac{1}{2},1]$.
  \item We derive a mixed first-order formulation that localizes the nonlocal operator via an auxiliary variable and incorporates boundary constraints needed to make $\partial_x^{-1}$ well-defined on bounded domains. The resulting HDG method is efficiently implementable by static condensation: element unknowns are eliminated locally and the globally coupled system involves only single-valued trace unknowns on the mesh skeleton.
  \item We establish an energy stability estimate for the semi-discrete method (and its fully discrete $\theta$-scheme counterpart) under assumptions on the stabilization parameters. Additionally, we provide the details on the error estimates of the fully discrete scheme.
  \item We provide an \(L^2\)-error analysis that includes the nonlinear flux contribution, yielding the convergence bound \(\|u-u_h\|_{L^2(\Omega)} = O(h^{k+1/2})\) for polynomial degree \(k\ge1\).
  \item Numerical tests confirm the optimal accuracy for smooth solutions and demonstrate robustness in challenging regimes, including solitary-wave propagation and non-smooth corner-type profiles, as well as the singular limit \(\beta\to0\) toward reduced OH dynamics.
\end{itemize}

Throughout, \(C>0\) denotes a generic constant independent of the mesh size \(h\) (and of \(\Delta t\) in fully discrete estimates), whose value may change from line to line.

The remainder of the paper is organized as follows.
In Section \ref{sec:prelim} we introduce notation, meshes, and the HDG finite element spaces.
Section \ref{sec:hdg} derives the mixed first-order formulation and presents the HDG spatial discretization, together with the energy stability estimate under appropriate stabilization conditions.
In Section \ref{sec:error}, we prove the semi-discrete error estimate.
Section \ref{sec:time} analyzes the fully discrete scheme based on the implicit $\theta$-method and establishes the corresponding stability and error bounds.
Section \ref{sec:numerics} reports numerical experiments that verify the theoretical convergence rates and demonstrate robustness for solitary-wave propagation and non-smooth peaked profiles, including the singular limit regime $\beta\to 0$. Finally, Section \ref{sec:conclusion} concludes the paper and outlines possible extensions.

\section{Preliminaries and notation}\label{sec:prelim}
We consider the Ostrovsky equation \eqref{eq:ostrovsky} on a bounded interval $\Omega = (x_L, x_R) \subset \mathbb{R}$. 
Let $\mathcal{T}_h = \{I_i\}_{i=1}^{N}$ be a partition of $\Omega$ into $N$ subintervals 
$I_i = (x_{i-1}, x_i)$ with $x_0 = x_L$ and $x_N = x_R$. 
The mesh size is denoted by $h = \max_i |I_i|$, where $|I_i| = x_i - x_{i-1}$. We set $\mathcal{F}_h = \{x_0, x_1, \dots, x_N\}$ and $\mathcal{F}_h^{0} = \{x_1,\dots,x_{N-1}\}$, 
and boundary points $\mathcal{F}_h^{\partial} = \{x_L, x_R\}$.
For each element $I_i$, we denote its boundary by $\partial I_i = \{x_{i-1}, x_i\}$.
The outward unit normal $n$ on $\partial I_i$ is defined as $n = -1$ at $x_{i-1}$ and $n = 1$ at $x_i$.

Let $P_k(I)$ denote the space of polynomials of degree at most $k$ on an interval $I$. 
The discontinuous polynomial space is
\[
V_h^k = \{ v \in L^2(\Omega) : v|_{I_i} \in P_k(I_i), \; \forall I_i \in \mathcal{T}_h \}.
\]
The trace spaces are 
\[
M_h(g) = \{ \widehat{v} \in L^2(\mathcal{F}_h): \widehat{v}|_{\mathcal{F}_h^{\partial}} = g \}, \qquad \widetilde{M}_h^L:= L^2(\mathcal{F}_h\setminus \{x_R\}), \qquad \widetilde{M}_h^R:= L^2(\mathcal{F}_h\setminus \{x_L\}),
\]
For $u, v \in L^2(\Omega)$, define
\[
(u,v)_{I_i} =  \int_{I_i} u v \, dx, \quad (u,v)_{\mathcal{T}_h} = \sum_{i=1}^{N} (u,v)_{I_i}, \quad 
\|u\|_{\mathcal{T}_h} = (u,u)_{\mathcal{T}_h}^{1/2},
\]
and we set 
\[
\langle \varphi, \psi n \rangle_{\partial\mathcal{T}_h} = \sum_{i=1}^{N} \langle \varphi, \psi n \rangle_{\partial I_i} = \sum_{i=1}^{N}[ \varphi|_{I_i}(x_i) \psi|_{I_i}(x_i) - \varphi|_{I_i}(x_{i-1}) \psi|_{I_i}(x_{i-1})],
\]
for sufficiently smooth $\varphi, \psi $.
The corresponding norm is $\|\varphi\|_{\partial\mathcal{T}_h} = \langle \varphi, \varphi \rangle_{\partial\mathcal{T}_h}^{1/2}$.
 We can also define $\langle \varphi, \psi n \rangle_{\partial\mathcal{T}_h} = \langle \varphi, \psi n \rangle_{\mathcal I^-} +\langle \varphi, \psi n \rangle_{\mathcal I^+}$, where $\langle \varphi, \psi n \rangle_{\mathcal I^-} = \sum_{i=1}^{N}\varphi|_{I_i}(x_i) \psi|_{I_i}(x_i)$ and $\langle \varphi, \psi n \rangle_{\mathcal I^+} = -\sum_{i=1}^{N} \varphi|_{I_i}(x_{i-1}) \psi|_{I_i}(x_{i-1}) $.

Denote by $P : L^2(\Omega) \to P_k(\Omega)$ the standard $L^2$-projection. For any function $w\in L^2(\Omega)$, we define $P w\in V_h^k$ as the unique element satisfying
\[
(P w - w,\phi)_{I_i}=0 \qquad \forall \phi\in P_k(I_i),\; \forall I_i\in\mathcal{T}_h.
\]

\section{HDG formulation for the Ostrovsky equation}\label{sec:hdg}
On $\Omega=(x_L,x_R)$ the operator $\partial_x^{-1}$ is not uniquely defined without fixing a boundary constraint.
Throughout the paper we localize the nonlocal term by introducing an auxiliary variable $v$ such that
\[
v_x = u \quad \text{in }\Omega,
\]
and we impose the boundary constraint
\(
v(x_R,t)=v_R=0.
\)
This choice yields a unique localization of $\partial_x^{-1}u$ on $\Omega$ and allows the Ostrovsky equation \eqref{eq:ostrovsky} to be rewritten as a local mixed system amenable to the HDG framework.

\subsection{Mixed formulation and local boundary conditions}
 By introducing auxiliary variables $q:=u_x$ and  $p=\beta q_x$, we rewrite the Ostrovsky equation \eqref{eq:ostrovsky} as a first-order system 
\begin{align}\label{eq:first-order}
q = u_x,  \quad
p = \beta q_x, \quad
v_x = u, \quad 
u_t - p_x + f(u)_x - \gamma v = 0,
\end{align}
along with the initial condition is \(u(x,0)=u_0(x)\) for \(x\in\Omega\). We first consider the case $\beta >0$ and we set the following boundary conditions 
\begin{equation}
\begin{aligned}\label{eq:physical-bc}
u(x_L,t) &= u_L(t), \quad u(x_R,t) = u_R(t), \quad  v(x_R,t) = v_R(t), \quad
 q(x_L,t) = q_L(t),
\end{aligned}
\end{equation}
to the system \eqref{eq:first-order}.
 On each element \(I_i\), the system \eqref{eq:first-order} is a local boundary-value problem for the unknowns \((u,q,p,v)\). To obtain a well-posed local problem, four boundary conditions are required. We impose  
 \[u(x_{i-1}) = \widehat{u}_{i-1},\quad u(x_i) = \widehat{u}_i, \quad v(x_i) = \widehat{v}_i, \quad q(x_{i-1}) = \widehat{q}_{i-1}.
 \]
 This choice is consistent with the sign structure used in the energy estimate and yields well-posed local solvers if $\beta>0$ and $\gamma>0$.
Given sufficiently regular initial condition and boundary data, the local problem admits a unique solution. Let \((U,Q,P,V)\) be defined piecewise on each \(I_i\) by local solution.
Then \((U,Q,P,V)\) is a global solution of \eqref{eq:first-order}-\eqref{eq:physical-bc} in \(\Omega\) if and only if it satisfy the continuity of $U$, $Q$, and $P - f(U)$ at the interior nodes \(x_i\in\mathcal{F}_h^{0}\) together with the global boundary conditions \eqref{eq:physical-bc}. Thus, for these
characterizations, the boundary data provided above for the local problem on $I_i$ are the global unknowns and determined from the continuity conditions and global boundary conditions \eqref{eq:physical-bc}, and the system of equations for the global unknowns is square. Our HDG method mimics the above continuous structure.

\subsection{HDG scheme}
The local HDG formulation on \(I_i\) is: for given numerical traces on $\partial I_i$ 
\begin{align*}
    \widehat{u}_h(x_{i-1}) =: \widehat{u}_{h,i-1},\quad \widehat{u}_h(x_i) =: \widehat{u}_{h,i},\quad \widehat{v}_h(x_{i}) =: \widehat{v}_{h,i}, \quad  \widehat{q}_h(x_{i-1}) =: \widehat{q}_{h,i-1} ,
\end{align*}
find \((u_h, v_h,  p_h, q_h) \in [P_k(I_i)]^4\) such that for all test functions \((\phi_u, \phi_v, \phi_p, \phi_q) \in [P_k(I_i)]^4\), there holds
\begin{subequations}\label{eq:hdg-local}
\begin{align}
((u_h)_t, \phi_u)_{I_i} + (p_h, \partial_x\phi_u)_{I_i} - (f(u_h), \partial_x\phi_u)_{I_i} - \gamma(v_h, \phi_u)_{I_i} \nonumber \\
- \langle( \widehat{p}_h - \widehat{f(u_h)})\,n, \phi_u \rangle_{\partial I_i} &= 0,\label{eq:hdg-local1}\\
-(v_h, \partial_x\phi_v)_{I_i} - (u_h, \phi_v)_{I_i} + \langle \widehat{v}_h n, \phi_v \rangle_{\partial I_i} &= 0, \label{eq:hdg-local2} \\
(p_h, \phi_p)_{I_i} + \beta (q_h, \partial_x\phi_p)_{I_i} - \beta \langle \widehat{q}_h n, \phi_p \rangle_{\partial I_i} &= 0, \label{eq:hdg-local3} \\
(q_h, \phi_q)_{I_i} + (u_h, \partial_x\phi_q)_{I_i} - \langle \widehat{u}_h n, \phi_q \rangle_{\partial I_i} &= 0. \label{eq:hdg-local4}
\end{align}
\end{subequations}
The remaining numerical traces are defined as follows:
    \begin{equation}\label{qv_nume_trace}
            \begin{aligned}
             \widehat{v}_h = v_h + \tau_{vq}(\widehat q_h-q_h)n, &\qquad \text{ at }~ x_{i-1} ,\\
       \widehat{q}_h = q_h +\tau_{qv}(\widehat v_h-v_h)n, &\qquad \text{ at }~ x_{i},
        \end{aligned}
    \end{equation}
        \begin{equation}\label{num_f_trace}
        \begin{aligned}
    \widehat{p}_h &=p_h+ \tau_{pu}\bigl(\widehat{u}_h - u_h\bigr)n, &\qquad\text{on } \partial I_i,\\
                \widehat{f(u_h)} &= f(u_h) - \tau_f(\widehat u_h-u_h)n,
&\qquad\text{on } \partial I_i.
        \end{aligned}
        \end{equation}        
The global HDG scheme is obtained by assembling the local formulations and imposing the transmission conditions weakly:
\begin{subequations}\label{eq:global-transmission}
\begin{align}
&\bigl\langle\widehat{v}_h,\mu_v\,n\bigr\rangle_{\partial\mathcal T_h}=\bigl\langle v_R,\mu_v\,n\bigr\rangle_{\{x_R\}},\label{eq:trans-v}\\
&\displaystyle\bigl\langle\widehat{q}_h,\mu_q\,n\bigr\rangle_{\partial\mathcal T_h}=\bigl\langle q_L,\mu_q\,n\bigr\rangle_{\{x_L\}},
\label{eq:trans-q}\\
&\bigl\langle\widehat{p}_h-\widehat{f(u_h)},\mu_p\,n\bigr\rangle_{\partial\mathcal T_h}=0,\label{eq:trans-pf}
\end{align}
\end{subequations}
for all $\mu_v \in \widetilde M_h^L$, $\mu_q \in \widetilde M_h^R$, and $\mu_v \in  M_h(0)$. The initial condition is enforced by $u_h(\cdot,0)=Pu_0$ and the global boundary conditions
\begin{equation}
\begin{aligned}\label{eq:physical-bc_num}
u_h(x_L,t) &= u_L(t), \quad u_h(x_R,t) = u_R(t), \quad  v_h(x_R,t) = v_R(t), \quad
 q_h(x_L,t) = q_L(t).
\end{aligned}
\end{equation}
The globally unknown numerical traces $\widehat{u}_h \in M_h(0)$, $\widehat{v}_h \in \widetilde M_h^R$, and $\widehat{q}_h \in \widetilde M_h^L$ can be determined by solving the above weak transmission conditions \eqref{eq:global-transmission}. Note that, $\widehat{u}$ is single–valued on $\mathcal{F}$, thus $\langle \widehat u_h n, \delta \rangle_{\partial\mathcal T_h} = 0$ for all $\delta\in M_h(0)$.

The system \eqref{eq:hdg-local}--\eqref{eq:global-transmission} is square. Under appropriate choices of the stabilization parameters $\tau_{vq},\tau_{qv}, \tau_{pu},\tau_f$, we expect unique solvability.

\subsection{Stability analysis}\label{sec:stability}
We now establish the stability of the semi‑discrete HDG scheme under the homogeneous boundary conditions \(
u_L(t)=u_R(t)= v_R(t) =q_L(t)=0.\)
Define the discrete energy
\[
\mathcal{E}_h(t)=\frac12\,\|u_h(t)\|_{\mathcal{T}_h}^2 .
\]
To control the nonlinear term, we introduce the following stabilization function. For any oriented point \(x\in\partial\mathcal{T}_h\) we set
\begin{equation}\label{eq:tildetau}
\tilde\tau(\widehat{u}_h,u_h)
:=\frac{1}{(\widehat{u}_h-u_h)^{2}}
   \int_{\widehat{u}_h}^{u_h}\bigl(f(s)-f(u_h)\bigr)n\,\mathrm ds.
\end{equation}
A straightforward estimate gives
\begin{equation}\label{tildetau}
\bigl|\tilde\tau(\widehat{u}_h,u_h)(x)\bigr|
\;\le\;
\frac12\sup_{s\in J(\widehat{u}_h(x),u_h(x))}|f'(s)|,
\end{equation}
with \(J(a,b)=[\min(a,b),\max(a,b)]\). In order to ensure stability, we need the point-wise inequality
\(
  \tau_f-\tilde\tau\ge0
\)
(the first component of our collective stabilization function), for which it
is sufficient to choose, e.g.
\(
  \tau_f=\sup_{s\in J(\widehat u_h,u_h)}\tfrac12\bigl\lvert f'( s)\bigr\rvert+\varepsilon
\) for some $\varepsilon>0$. In addition, we require the following global assumption.
\begin{assumption}[Stabilization Parameters]\label{ass:stab}
The stabilization parameters \(\tau_f, \tau_{pu}, \tau_{vq}, \tau_{qv}\) are chosen such that the following inequalities hold pointwise on \(\partial\mathcal{T}_h\):
\[
\tau_f - \tilde{\tau} \geq \tilde{C}, \quad 
\tau_{pu} \geq c, \quad 
\frac{\beta}{2} - \frac{\gamma}{2}\tau_{vq}^2 \geq c, \quad 
\frac{\gamma}{2} - \frac{\beta}{2}\tau_{qv}^2 \geq c.
\]
Here, \(\tilde{C}\) is chosen sufficiently large, and \(c > 0\) is chosen such that \(c - \bar{C}\delta \geq 0\), where \(\bar{C}\) is a constant arising in the proof of Theorem \ref{thm:error} and \(\delta > 0\) is a small parameter from Young's inequality. For the {\it stability} statement alone it suffices to take $\widetilde C=c=0$.
\end{assumption}

\begin{theorem}[Stability]\label{thm:stability}
Under the homogeneous boundary conditions stated above and Assumption~\ref{ass:stab}, the solution of the HDG scheme \eqref{eq:hdg-local} satisfies
\[
\frac{d}{dt}\mathcal{E}_h(t)\le0 .
\]
\end{theorem}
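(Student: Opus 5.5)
The proof is an energy argument. In the local equations \eqref{eq:hdg-local1}--\eqref{eq:hdg-local4}, sum over all elements $I_i\in\mathcal T_h$ and choose the test functions
\[
\phi_u=u_h,\qquad \phi_v=\gamma q_h\ \text{(or a suitable multiple)},\qquad \phi_p=q_h,\qquad \phi_q=-p_h .
\]
These are chosen so that all volume terms either cancel or combine into exact derivatives $\partial_x(\cdot)$.

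Then integrate by parts elementwise.
\begin{itemize}
\item With $\phi_p=q_h$ and $\phi_q=-p_h$, the terms $(p_h,q_h)$ cancel. What remains is $\beta(q_h,\partial_x q_h)=\tfrac{\beta}{2}\langle q_h^2,n\rangle_{\partial\mathcal T_h}$ and $-(u_h,\partial_x p_h)$.
\item The term $-(u_h,\partial_x p_h)$ is combined with $(p_h,\partial_x u_h)$ from \eqref{eq:hdg-local1}, giving the boundary term $\langle u_h p_h,n\rangle$.
\item The Coriolis term $-\gamma(v_h,u_h)$ is handled by testing \eqref{eq:hdg-local2} with $\phi_v=\gamma v_h$. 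This replaces $\gamma(u_h,v_h)$ by $-\tfrac{\gamma}{2}\langle v_h^2,n\rangle+\gamma\langle\widehat v_h v_h,n\rangle$.
\item The nonlinear volume term is $(f(u_h),\partial_x u_h)=\langle F(u_h),n\rangle$, where $F'=f$.
\end{itemize}
So I would take $\phi_v=\gamma v_h$, keeping $\phi_p=q_h$ and $\phi_q=-p_h$. After these steps, $\tfrac{d}{dt}\mathcal E_h$ equals a sum of boundary terms on $\partial\mathcal T_h$ only.

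Next, insert the traces \eqref{qv_nume_trace}--\eqref{num_f_trace} and add the transmission conditions \eqref{eq:global-transmission}, tested with $\mu_p=\widehat u_h$, $\mu_v=\gamma\widehat v_h$ and $\mu_q=\beta\widehat q_h$ (up to sign). Add also the identity $\langle \widehat u_h n,\delta\rangle=0$ with $\delta=F(\widehat u_h)$. Under homogeneous boundary data, all contributions at the physical boundary vanish or are single-valued. The terms then group elementwise as follows.
\begin{itemize}
\item The $(u,p)$ terms give $-\langle \tau_{pu}(\widehat u_h-u_h)^2,1\rangle_{\partial\mathcal T_h}$.
\item The nonlinear terms give
\[
\Bigl\langle \int_{\widehat u_h}^{u_h}(f(s)-f(u_h))\,ds\; n,1\Bigr\rangle-\langle\tau_f(\widehat u_h-u_h)^2,1\rangle=-\langle(\tau_f-\tilde\tau)(\widehat u_h-u_h)^2,1\rangle,
\]
by the definition \eqref{eq:tildetau}.
\item The $(v,q)$ terms give a quadratic form in the jumps $\widehat q_h-q_h$ and $\widehat v_h-v_h$, evaluated at the left endpoint $x_{i-1}$ and the right endpoint $x_i$ separately.
\end{itemize}

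The main obstacle is this last $(v,q)$ block, because the traces $\widehat v_h$ at $x_{i-1}$ and $\widehat q_h$ at $x_i$ are defined through the other variable's jump. At $x_{i-1}$, substituting $\widehat v_h-v_h=\tau_{vq}(\widehat q_h-q_h)n$ turns the $v$-contribution $\tfrac{\gamma}{2}(\widehat v_h-v_h)^2$ into $\tfrac{\gamma}{2}\tau_{vq}^2(\widehat q_h-q_h)^2$. Likewise, at $x_i$ the $\beta$-contribution becomes $\tfrac{\beta}{2}\tau_{qv}^2(\widehat v_h-v_h)^2$. The cross terms $\widehat q_h\widehat v_h$ from the different equations must cancel. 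I would try to arrange this by completing squares of the form $\pm\tfrac12(a^2-2ab)=\pm\tfrac12((a-b)^2-b^2)$ and summing over interfaces, so that the single-valued hat terms telescope to the boundary, where they vanish. The remaining quantities should be
\[
-\Bigl(\tfrac{\beta}{2}-\tfrac{\gamma}{2}\tau_{vq}^2\Bigr)(\widehat q_h-q_h)^2 \ \text{at } x_{i-1},\qquad -\Bigl(\tfrac{\gamma}{2}-\tfrac{\beta}{2}\tau_{qv}^2\Bigr)(\widehat v_h-v_h)^2\ \text{at } x_i,
\]
both $\le 0$ by Assumption~\ref{ass:stab} with $c=0$. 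If the signs do not come out this way, I would adjust $\phi_v=\pm\gamma v_h$ and the sign of $\mu_q$ until the weights match the expressions in the assumption. Collecting all terms gives $\tfrac{d}{dt}\mathcal E_h\le 0$.
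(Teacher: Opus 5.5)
Your route is the paper's: energy test functions, elementwise integration by parts, the transmission condition for $\widehat p_h-\widehat{f(u_h)}$ tested with $\mu_p=\widehat u_h$, telescoping of $F(\widehat u_h)$, substitution of the traces, and Assumption~\ref{ass:stab} with $\tilde C=c=0$. Your Coriolis step (solving \eqref{eq:hdg-local2} with $\phi_v=\gamma v_h$ for $\gamma(u_h,v_h)$ and substituting) is equivalent to the paper adding it with $\phi_v=-\gamma v_h$. Your nonlinear computation is also correct.

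\textbf{The sign of $(\phi_p,\phi_q)$ is wrong.} With $\phi_q=-p_h$, equation \eqref{eq:hdg-local4} contributes $-(u_h,\partial_x p_h)$, and
\[
(p_h,\partial_x u_h)_{\mathcal T_h}-(u_h,\partial_x p_h)_{\mathcal T_h}=2(p_h,\partial_x u_h)_{\mathcal T_h}-\langle u_h p_h,n\rangle_{\partial\mathcal T_h}
\]
is not a boundary term, so your claimed collapse fails. These signs are not free parameters:
\begin{itemize}
\item collapsing the $u$--$p$ volume terms forces $\phi_q=+p_h$;
\item cancelling $(q_h,p_h)$ then forces $\phi_p=-q_h$, which is the paper's choice;
\item $\phi_v=-\gamma v_h$ is likewise forced by the cancellation of $\gamma(u_h,v_h)$.
\end{itemize}
Your fallback of adjusting $\phi_v$ or the sign of $\mu_q$ cannot repair this. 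The $\mu_q$ equation has zero right-hand side, so its sign is irrelevant, and $\phi_v$ was never the problem.

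\textbf{Why the forced sign matters.} It is exactly what makes the argument close, so it has to be derived, not tuned to match the assumption. It puts the $\beta$-block in the form
\[
-\beta(q_h,\partial_x q_h)+\beta\langle\widehat q_h,q_hn\rangle=-\tfrac\beta2\langle(\widehat q_h-q_h)^2,n\rangle+\tfrac\beta2\langle\widehat q_h^2,n\rangle ,
\]
whose sign is opposite to the $\gamma$-block $\tfrac\gamma2\langle(\widehat v_h-v_h)^2,n\rangle-\tfrac\gamma2\langle\widehat v_h^2,n\rangle$. Now substitute $\widehat v_h-v_h=\tau_{vq}(\widehat q_h-q_h)n$ at $x_{i-1}$ (where $n=-1$) and $\widehat q_h-q_h=\tau_{qv}(\widehat v_h-v_h)n$ at $x_i$. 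In the identity $\tfrac12\tfrac{d}{dt}\|u_h\|^2+\dots=0$ this yields $(\tfrac\beta2-\tfrac\gamma2\tau_{vq}^2)(\widehat q_h-q_h)^2$ at $x_{i-1}$ and $(\tfrac\gamma2-\tfrac\beta2\tau_{qv}^2)(\widehat v_h-v_h)^2$ at $x_i$. With your $+\beta(q_h,\partial_x q_h)$, the $x_{i-1}$ term would instead be $-(\tfrac\beta2+\tfrac\gamma2\tau_{vq}^2)(\widehat q_h-q_h)^2$. No choice of stabilization makes that nonnegative.

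\textbf{Two smaller corrections.} First, there are no $\widehat q_h\widehat v_h$ cross terms to cancel. Each block completes its own square as above, and the $v$--$q$ coupling enters only through the trace substitution. Second, after telescoping, only $\widehat v_h(x_R)$ and $\widehat q_h(x_L)$ vanish. The terms $\tfrac\gamma2\widehat v_h(x_L)^2$ and $\tfrac\beta2\widehat q_h(x_R)^2$ survive. They are harmless only because, with the correct signs, they appear as nonnegative terms in \eqref{identity_31}.
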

\begin{proof}
We begin by choosing the test functions
\[
\phi_u=u_h,\quad \phi_v=-\gamma v_h,\quad \phi_p=-q_h,\quad \phi_q=p_h,
\]
in the global equations \eqref{eq:hdg-local1}--\eqref{eq:hdg-local4}, adding these four equations and summing on all $I_i$ yields
\begin{align*}
&\frac12\frac{d}{dt}\|u_h\|_{\mathcal{T}_h}^2
+(p_h,\partial_x u_h)_{\mathcal{T}_h}+(u_h,\partial_x p_h)_{\mathcal{T}_h}
-\bigl\langle\widehat{p}_h,u_h n\bigr\rangle_{\partial\mathcal{T}_h}
-\bigl\langle\widehat{u}_h,p_h n\bigr\rangle_{\partial\mathcal{T}_h} \\
&-(f(u_h),\partial_x u_h)_{\mathcal{T}_h}
+\bigl\langle\widehat{f(u_h)},u_h n\bigr\rangle_{\partial\mathcal{T}_h}
+\gamma(v_h,\partial_x v_h)_{\mathcal{T}_h}
-\gamma\bigl\langle\widehat{v}_h,v_h n\bigr\rangle_{\partial\mathcal{T}_h} \\
&-\beta(q_h,\partial_x q_h)_{\mathcal{T}_h}
+\beta\bigl\langle\widehat{q}_h,q_h n\bigr\rangle_{\partial\mathcal{T}_h}=0 .
\end{align*}
Using integration by parts and the transmission conditions \eqref{eq:global-transmission} we obtain the identity
\begin{align*}
&\frac12\frac{d}{dt}\|u_h\|_{\mathcal{T}_h}^2
+\bigl\langle(\widehat{p}_h-p_h),(\widehat{u}_h-u_h)n\bigr\rangle_{\partial\mathcal{T}_h}
-(f(u_h),\partial_x u_h)_{\mathcal{T}_h}
-\bigl\langle\widehat{f(u_h)},(\widehat{u}_h-u_h)n\bigr\rangle_{\partial\mathcal{T}_h} \\
&\qquad
-\frac\gamma2\bigl(-\widehat{v}_h(x_L)^2+\widehat{v}_h(x_R)^2\bigr)
+\frac\gamma2\bigl\langle(\widehat{v}_h-v_h)^2,n\bigr\rangle_{\partial\mathcal{T}_h} \\
&\qquad
+\frac\beta2\bigl(-\widehat{q}_h(x_L)^2+\widehat{q}_h(x_R)^2\bigr)
-\frac\beta2\bigl\langle(\widehat{q}_h-q_h)^2,n\bigr\rangle_{\partial\mathcal{T}_h}=0 .
\end{align*}
Let \(G\) be an antiderivative of \(f\). Then
\[
(f(u_h),\partial_x u_h)_{\mathcal{T}_h}
= \bigl\langle G(u_h),n\bigr\rangle_{\partial\mathcal{T}_h}
= \Bigl\langle\int_{\widehat{u}_h}^{u_h}f(s)\,\mathrm{d}s,n\Bigr\rangle_{\partial\mathcal{T}_h}.
\]
Combining this with the definition of \(\widehat{f(u_h)}\) from \eqref{num_f_trace}, we have
\begin{align*}
&-(f(u_h),\partial_x u_h)_{\mathcal{T}_h}
-\bigl\langle\widehat{f(u_h)},(\widehat{u}_h-u_h)n\bigr\rangle_{\partial\mathcal{T}_h} \\
&\qquad=
-\Bigl\langle\int_{\widehat{u}_h}^{u_h}\bigl(f(s)-f(u_h)\bigr)\mathrm{d}s,n\Bigr\rangle_{\partial\mathcal{T}_h}
-\bigl\langle (\widehat{f(u_h)}- f(u_h) ) ,(\widehat{u}_h-u_h)n\bigr\rangle_{\partial\mathcal{T}_h} \\
&\qquad=\bigl\langle(\tau_f-\tilde\tau)(\widehat{u}_h-u_h)^2,1\bigr\rangle_{\partial\mathcal{T}_h}.
\end{align*}
where $\tilde \tau$ is defined by \eqref{tildetau}.
We insert the definitions of the numerical traces \(\widehat{p}_h,\widehat{v}_h,\widehat{q}_h\) from \eqref{qv_nume_trace}-\eqref{num_f_trace} and apply the boundary conditions, which gives
\begin{equation}
\begin{aligned}\label{identity_31}
&\frac12\frac{d}{dt}\|u_h\|_{\mathcal{T}_h}^2+\tau_{pu}\bigl\langle(\widehat u_h-u_h)^2,1\bigr\rangle_{\partial \mathcal T_h}  +\langle \tau_f-\tilde \tau, (\widehat u_h -u_h)^2\rangle_{\partial \mathcal T_h}\\& \quad
+\frac\gamma2\widehat{v}_h(x_L)^2 +\frac\gamma2\tau_{vq}^2\langle( \widehat{q}_h- q_h)^2, n\rangle_{\mathcal I^+} +\frac\gamma2\langle( \widehat{v}_h- v_h)^2, n\rangle_{\mathcal I^-} \\&\quad+\frac\beta2\widehat{q}_h(x_R)^2 -\frac\beta2\langle( \widehat{q}_h- q_h)^2, n\rangle_{\mathcal I^+}-\frac\beta2\tau_{qv}^2\langle( \widehat{v}_h- v_h)^2, n\rangle_{\mathcal I^-}=0.
\end{aligned}
\end{equation}
Applying assumption \ref{ass:stab} implies
\begin{equation*}
   \frac12\frac{d}{dt}\|u_h\|_{\mathcal{T}_h}^2\leq 0.
\end{equation*}
This completes the proof.
\end{proof}
\begin{remark}[Conservative property for periodic problems with $\beta>0$ and $\gamma>0$]
For periodic boundary conditions ($\widehat{v}_h(x_L) =\widehat{v}_h(x_R)$, $\widehat{q}_h(x_L) = \widehat{q}_h(x_R)$, and $\widehat{u}_h(x_L) = \widehat{u}_h(x_R)$), our HDG scheme \eqref{eq:hdg-local}-\eqref{eq:global-transmission} can be made exactly conservative by an appropriate choice of the stabilization parameters. As we mentioned in Assumption \ref{ass:stab} that we can take $\tilde C =c =0$ for the stability analysis. Therefore, by setting $\tau_{pu}=0$, $\tau_{vq}=\sqrt{\beta/\gamma}$, $\tau_{qv}=\sqrt{\gamma/\beta}$, and $\tau_f = \tilde{\tau}$ (pointwise), \eqref{identity_31} yields
\[
\frac12\frac{d}{dt}\|u_h\|_{\mathcal{T}_h}^2 = 0,
\]
making the scheme energy-conservative.

In practice we take $\tau_f=\tilde\tau$, where $\tilde\tau$ is evaluated elementwise from the current trace and 
if $\widehat u_h=u_h$ at a quadrature point, we interpret the quotient-based definition of $\tilde\tau$ in the limiting sense (equivalently, by replacing the quotient with $|f'(u_h)|$), which is well-defined for the smooth fluxes considered here.
\end{remark}
\begin{remark}[Case $\beta < 0$]
The stability analysis and numerical traces presented above assume $\beta > 0$. For the case $\beta<0$, the stability analysis requires a different choice of numerical traces and boundary conditions for the auxiliary variable $q$. Specifically, we now set local given traces on $\partial I_i$ as
\[
\widehat{u}_h(x_{i-1}) =: \widehat{u}_{h,i-1},\quad \widehat{u}_h(x_i) =: \widehat{u}_{h,i},\quad \widehat{v}_h(x_{i}) =: \widehat{v}_{h,i}, \quad  \widehat{q}_h(x_{i}) =: \widehat{q}_{h,i},
\]
and impose the global boundary condition $u(x_L,t) = u_L(t),~ u(x_R,t) = u_R(t),~  v(x_R,t) = v_R(t)$, and $
 q(x_R,t) = q_R(t)$ . The remaining local numerical traces are accordingly modified to 
 $$ \widehat{v}_h = v_h \qquad\text{and} \qquad \widehat{q}_h = q_h, \qquad \text{ at }~ x_{i-1},$$
along with other numerical traces \eqref{num_f_trace},
 while the transmission condition for $q$ becomes
\[
\bigl\langle\widehat{q}_h,\mu_q\,n\bigr\rangle_{\partial\mathcal T_h}
= \bigl\langle q_R,\mu_q\,n\bigr\rangle_{\{x_R\}}.
\]
Proceeding with the same energy argument as in the proof of Theorem~\ref{thm:stability}, the energy identity \eqref{identity_31} becomes
\begin{align*}
&\frac12\frac{d}{dt}\|u_h\|_{\mathcal{T}_h}^2
+ \tau_{pu}\bigl\langle(\widehat u_h-u_h)^2,1\bigr\rangle_{\partial\mathcal T_h}
+ \langle \tau_f-\tilde \tau, (\widehat u_h -u_h)^2\rangle_{\partial\mathcal T_h} \\
&\quad + \frac{\gamma}{2}\widehat{v}_h(x_L)^2 
+ \frac{\gamma}{2}\bigl\langle( \widehat{v}_h- v_h)^2, n\bigr\rangle_{\mathcal I^-} - \frac{\beta}{2}\widehat{q}_h(x_L)^2 
- \frac{\beta}{2}\bigl\langle( \widehat{q}_h- q_h)^2, n\bigr\rangle_{\mathcal I^-}=0.
\end{align*}
Under the same homogeneous boundary conditions, stability is ensured if the stabilization parameters satisfy
\(
\tau_f - \tilde{\tau} \geq \tilde{C},~\;
\tau_{pu} \geq c, 
\)
with $\tilde{C}\geq 0$ and $c\geq 0$ as in Assumption \ref{ass:stab}.
\end{remark}
\section{Error analysis}\label{sec:error}

We now present an error analysis for the proposed HDG method \eqref{eq:hdg-local}. The analysis is based on standard $L^2$ projections and follows the usual approach for error estimation of discontinuous Galerkin methods. Let $(u,v,p,q)$ denote the exact solution of the OV equation \eqref{eq:ostrovsky}  with sufficient regularity, and let $(u_h,v_h,p_h,q_h)$ be the semi-discrete HDG approximation defined by \eqref{eq:hdg-local}. 

We begin by considering the  $L^2$ projection onto the local polynomial space $P_k(I_i)$. 
\begin{lemma}[Inverse inequalities \cite{Ciarlet1978Fem,XuShu2007}]\label{lem:inverse_2d}
For every $v_h\in V_h^{k}$ $(k\ge0)$, there exists a positive constant $C$ independent
of $v_h$ and $h$, such that the following estimates hold
\begin{equation}\label{invesre}
\begin{aligned}
\|\partial_x v_h\|_{\mathcal T_h}
  \le Ch^{-1}\|v_h\|_{\mathcal T_h},\quad
\|v_h\|_{\partial \mathcal T_h}
  \le Ch^{-\frac{1}{2}}\|v_h\|_{\mathcal T_h}, \quad \|v_h\|_{L^\infty(\mathcal T_h)}
  \le Ch^{-\frac{1}{2}}\|v_h\|_{\mathcal T_h}.
  \end{aligned}
\end{equation}

\end{lemma}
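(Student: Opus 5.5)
The three estimates in Lemma~\ref{lem:inverse_2d} are local, so I will prove each on a single element and then sum over the mesh. The plan is to pull back to a fixed reference interval by an affine map and use that all norms on the finite-dimensional space $P_k$ are equivalent. I assume the mesh is quasi-uniform, i.e.\ $|I_i|\ge \sigma h$ for some $\sigma>0$ independent of $h$; this is needed to replace the local size $|I_i|$ by the global $h$, and is the standard setting of \cite{Ciarlet1978Fem,XuShu2007}.

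Let $\hat I=(-1,1)$ and $F_i:\hat I\to I_i$ be $F_i(\xi)=\tfrac{x_{i-1}+x_i}{2}+\tfrac{|I_i|}{2}\xi$. For $v\in P_k(I_i)$ set $\hat v=v\circ F_i\in P_k(\hat I)$. On $P_k(\hat I)$ the quantities $\|\partial_\xi\hat v\|_{L^2(\hat I)}$, $(\hat v(-1)^2+\hat v(1)^2)^{1/2}$ and $\|\hat v\|_{L^\infty(\hat I)}$ are seminorms or norms, and $\|\hat v\|_{L^2(\hat I)}$ is a norm. By finite dimensionality each is bounded by $\hat C(k)\|\hat v\|_{L^2(\hat I)}$. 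The change of variables gives
\[
\|v\|_{L^2(I_i)}^2=\tfrac{|I_i|}{2}\|\hat v\|_{L^2(\hat I)}^2,\quad \|\partial_x v\|_{L^2(I_i)}^2=\tfrac{2}{|I_i|}\|\partial_\xi\hat v\|_{L^2(\hat I)}^2,
\]
while point values and the $L^\infty$ norm are unchanged. Combining these yields the local bounds
\[
\|\partial_x v\|_{I_i}\le C|I_i|^{-1}\|v\|_{I_i},\qquad v(x_{i-1})^2+v(x_i)^2\le C|I_i|^{-1}\|v\|_{I_i}^2,\qquad \|v\|_{L^\infty(I_i)}\le C|I_i|^{-1/2}\|v\|_{I_i}.
\]

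By quasi-uniformity, $|I_i|^{-1}\le \sigma^{-1}h^{-1}$. Summing the squares of the first two local bounds over $i$ then gives the first two estimates in \eqref{invesre}, since $\|v_h\|_{\partial\mathcal T_h}^2$ is exactly the sum of the endpoint values squared. For the $L^\infty$ bound, take the element $I_j$ where the maximum is attained and use $\|v_h\|_{I_j}\le\|v_h\|_{\mathcal T_h}$.

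There is no real obstacle here. The only point that needs care is the quasi-uniformity assumption: without it, the bounds hold only with the local size $|I_i|$ in place of $h$.
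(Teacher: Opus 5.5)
Your proposal is correct. The affine pull-back to a reference interval, combined with the equivalence of (semi)norms on $P_k$, is the standard argument behind the results the paper cites from Ciarlet and Xu--Shu; the paper gives no proof of its own. Your explicit quasi-uniformity hypothesis $|I_i|\ge\sigma h$ is genuinely necessary, since a single very small element breaks the bounds once $|I_i|$ is replaced by $h$, and it is left implicit in the paper, which only alludes to ``mesh regularity'' in the constants of Theorem~\ref{thm:error}.
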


\begin{lemma}[Interpolation inequality \cite{Ciarlet1978Fem,XuShu2007}]\label{lem:approx_2d}
For any $\omega\in H^{k+1}(\mathcal{T}_h)$ there exists a constant
$C>0$, 
independent of $h$, such that
\begin{equation}\label{eq:proj-est}
     \|P\omega-\omega\|_{\mathcal T_h}
  +
  h^{\frac12}\|P\omega-\omega\|_{L^\infty(\mathcal T_h)}
  +
  h^{\frac12}\|P\omega-\omega\|_{\partial\mathcal T_h}
  \le
  Ch^{k+1}.
\end{equation}
\end{lemma}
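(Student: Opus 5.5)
The estimate \eqref{eq:proj-est} is local, so the plan is to prove it element by element on $I_i$ and then sum; the constant $C$ will depend on $k$ and on $|\omega|_{H^{k+1}}$, with $h^{k+1}$ understood as $h^{k+1}|\omega|_{H^{k+1}(\mathcal T_h)}$. First map each $I_i$ affinely onto the reference interval $\hat I=(-1,1)$ via $x = \tfrac{x_{i-1}+x_i}{2}+\tfrac{|I_i|}{2}\hat x$. The $L^2$ projection commutes with this affine map, so the elementwise projection $P$ corresponds to the reference projection $\hat P$ onto $P_k(\hat I)$. On $\hat I$, the operator $\hat w\mapsto \hat w-\hat P\hat w$ is bounded from $H^{k+1}(\hat I)$ into $L^2(\hat I)$. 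It is also bounded into $C(\bar{\hat I})$, by the one-dimensional Sobolev embedding $H^1\hookrightarrow C$. It vanishes on $P_k(\hat I)$. The Bramble--Hilbert lemma (equivalently Deny--Lions) therefore gives
\[
\|\hat w-\hat P\hat w\|_{L^2(\hat I)}+\|\hat w-\hat P\hat w\|_{L^\infty(\hat I)}\le C|\hat w|_{H^{k+1}(\hat I)}.
\]

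Next, scale back to $I_i$. We have $\|\cdot\|_{L^2(I_i)}=(|I_i|/2)^{1/2}\|\cdot\|_{L^2(\hat I)}$ and $|\hat w|_{H^{k+1}(\hat I)}=(|I_i|/2)^{k+1/2}|w|_{H^{k+1}(I_i)}$, while the $L^\infty$ norm is scale invariant. These give
\[
\|Pw-w\|_{L^2(I_i)}\le Ch^{k+1}|w|_{H^{k+1}(I_i)},\qquad \|Pw-w\|_{L^\infty(I_i)}\le Ch^{k+1/2}|w|_{H^{k+1}(I_i)}.
\]

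The boundary term on $\partial I_i$ consists of only two point values. It is therefore bounded by $\sqrt2\,\|Pw-w\|_{L^\infty(I_i)}$, and hence also by $Ch^{k+1/2}|w|_{H^{k+1}(I_i)}$.

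To globalize, square the $L^2$ and boundary bounds and sum over $i$; this yields $\|P\omega-\omega\|_{\mathcal T_h}\le Ch^{k+1}|\omega|_{H^{k+1}(\mathcal T_h)}$, and the same argument handles $\|P\omega-\omega\|_{\partial\mathcal T_h}$. For the $L^\infty$ term, take the maximum over $i$ and use $|\omega|_{H^{k+1}(I_i)}\le|\omega|_{H^{k+1}(\mathcal T_h)}$. Multiplying the $L^\infty$ and boundary bounds by $h^{1/2}$ then gives \eqref{eq:proj-est}. The only slightly delicate point is the $L^\infty$ and trace part, where we need the $C(\bar{\hat I})$ boundedness on the reference element. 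This holds because $k+1\ge1$ and we are in one dimension, so the Bramble--Hilbert argument applies with the sup norm.
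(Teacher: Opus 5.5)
Your proof is correct, and it is essentially the paper's approach: the paper gives no proof of its own and cites Ciarlet and Xu--Shu, whose argument is the same affine scaling to a reference interval combined with the Bramble--Hilbert lemma (using $H^1\hookrightarrow C$ in one dimension for the $L^\infty$ and point-trace parts). Reading the constant as $C\,|\omega|_{H^{k+1}(\mathcal T_h)}$ and noting that only $|I_i|\le h$ is needed, with no quasi-uniformity, are both consistent with how the lemma is used later.
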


To deal with the nonlinear flux, we make an {\it a priori} assumption, that for all $t< T$ and small enough $h$, there holds 
    \begin{equation}\label{ass1uu_h}
        \begin{aligned}
            \|u-u_h\|_{\mathcal T_h} \leq h.
        \end{aligned}
    \end{equation}
    Consequently, we have
     \begin{equation}\label{ass1uu_h2}
        \begin{aligned}
            &\|u-u_h\|_{L^{\infty}(\mathcal T_h)} \leq Ch^{1/2}.
        \end{aligned}
    \end{equation}
The above assumption is not required for linear flux $f(u) =cu$. 

We decompose the errors into projection errors and the errors between the projections and the numerical solutions
\begin{align*}
e_\omega:= \omega-\omega_h &= (P\omega-\omega_h) - (P\omega-\omega) =: \xi^\omega - \rho^\omega,\qquad\qquad \omega \in \{u,v,p,q\}.
\end{align*}
On the skeleton $\partial \mathcal T_h$, we introduce
\begin{align}\label{define_hatxi}
\widehat{\xi}^u &:= u-\widehat{u}_h, \quad \widehat{\xi}^q := q-\widehat{q}_h, \quad \widehat{\xi}^p := p-\widehat{p}_h, \quad \widehat{\xi}^v := v-\widehat{v}_h,
\end{align}
so that the simple algebraic manipulations and definitions \eqref{qv_nume_trace}-\eqref{num_f_trace} imply that in $I_i$
\begin{equation}\label{qv_nume_tracexi}
            \begin{aligned}
     \widehat{\xi}^v &= \xi^v + \tau_{vq}(\widehat \xi^q -\xi^q)n-(\rho^v-\tau_{vq}\rho^qn), \quad \text{ at }~ x_{i-1},  \\ 
    \widehat{\xi}^q &= \xi^q + \tau_{qv}(\widehat \xi^v -\xi^v)n-(\rho^q-\tau_{qv}\rho^vn), \quad \text{ at }~ x_{i},
        \end{aligned}
    \end{equation}
    and
 \begin{equation}\label{num_p_tracexi}
            \begin{aligned}
            \widehat{\xi}^p =
                \xi^p+ \tau_{pu}\bigl(\widehat \xi^u -\xi^u\bigr)n  - (\rho^p- \tau_{pu}\rho^un ),\qquad~ \text{ at }~ x_{i-1} ~\text{ and }~ x_{i} ,    \end{aligned}
    \end{equation}    
Since the exact solution satisfies \eqref{eq:hdg-local}-\eqref{eq:global-transmission}, the HDG scheme \eqref{eq:hdg-local} yields the error equations
\begin{subequations}\label{eq:error-global}
\begin{align}
((e_u)_t,\phi_u)_{\mathcal{T}_h}+(e_p,\partial_x\phi_u)_{\mathcal{T}_h}-(f(u)-f(u_h),\partial_x\phi_u)_{\mathcal{T}_h}-\gamma(e_v,\phi_u)_{\mathcal{T}_h}\nonumber\\
\qquad\qquad -\langle (p-\widehat{p}_h)-(f(u)-\widehat{f(u_h)}),\phi_u\,n\rangle_{\partial\mathcal{T}_h}&=0,\label{eq:error1}\\
-(e_v,\partial_x\phi_v)_{\mathcal{T}_h}-(e_u,\phi_v)_{\mathcal{T}_h}+\langle v-\widehat{v}_h,\phi_v\,n\rangle_{\partial\mathcal{T}_h}&=0,\label{eq:error2}\\
(e_p,\phi_p)_{\mathcal{T}_h}+\beta(e_q,\partial_x\phi_p)_{\mathcal{T}_h}-\beta\langle q-\widehat{q}_h,\phi_p\,n\rangle_{\partial\mathcal{T}_h}&=0,\label{eq:error3}\\
(e_q,\phi_q)_{\mathcal{T}_h}+(e_u,\partial_x\phi_q)_{\mathcal{T}_h}-\langle u-\widehat{u}_h,\phi_q\,n\rangle_{\partial\mathcal{T}_h}&=0.\label{eq:error4}
\end{align}
\end{subequations}
Using the decomposition $e_\omega=\xi^\omega-\rho^\omega$ ($\omega\in\{u,q,p,v\}$), definition on $L^2$ projection $\rho^\omega$ and \eqref{define_hatxi}, we obtain the error equations in terms of $\xi$ and $\rho$:
\begin{subequations}\label{eq:error-proj}
\begin{align}
((\xi^u)_t,\phi_u)_{\mathcal{T}_h}+(\xi^p,\partial_x\phi_u)_{\mathcal{T}_h}
-(f(u)-f(u_h),\partial_x\phi_u)_{\mathcal{T}_h}-\gamma(\xi^v,\phi_u)_{\mathcal{T}_h}\nonumber\\
\qquad -\bigl\langle \widehat{\xi}^p , \phi_u n\bigr\rangle_{\partial\mathcal{T}_h}+\langle (f(u)-\widehat{f(u_h)}),\phi_u\,n\rangle_{\partial\mathcal{T}_h}
&=0\label{eq:error-proj1}\\
-(\xi^v,\partial_x\phi_v)_{\mathcal{T}_h}-(\xi^u,\phi_v)_{\mathcal{T}_h}+\langle \widehat{\xi}^v,\phi_v n\rangle_{\partial\mathcal{T}_h}
&=0,\label{eq:error-proj2}\\[4pt](\xi^p,\phi_p)_{\mathcal{T}_h}+\beta(\xi^q,\partial_x\phi_p)_{\mathcal{T}_h}-\beta\langle \widehat{\xi}^q,\phi_p n\rangle_{\partial\mathcal{T}_h}
&=0,\label{eq:error-proj3}\\[4pt]
(\xi^q,\phi_q)_{\mathcal{T}_h}+(\xi^u,\partial_x\phi_q)_{\mathcal{T}_h}-\langle \widehat{\xi}^u,\phi_q n\rangle_{\partial\mathcal{T}_h}
&=0,\label{eq:error-proj4}
\end{align}
\end{subequations}
and transmission condition \eqref{eq:global-transmission} and smoothness of exact solution $(u,v,p,q)$ gives
\begin{subequations}\label{eq:global-transmissionxi}
\begin{align}
\displaystyle\bigl\langle\widehat{\xi}^v,\mu_v\,n\bigr\rangle_{\partial\mathcal T_h}=0,\qquad \displaystyle\bigl\langle\widehat{\xi}^q,\mu_q\,n\bigr\rangle_{\partial\mathcal T_h}=0,\qquad\bigl\langle\widehat{\xi^p}-(f(u) -\widehat{f(u_h)}),\mu_p\,n\bigr\rangle_{\partial\mathcal T_h}=0,
\end{align}
\end{subequations}
for all $\mu_v \in \widetilde M_h^L$, $\mu_q \in \widetilde M_h^R$, and $\mu_v \in  M_h(0)$.
These equations form the starting point for the subsequent error estimates. We now state and prove the main error estimate. We assume that the exact solution $(u,v,p,q)$ is sufficiently smooth, specifically that each component belongs to $H^{k+1}(\Omega)$ for the spatial regularity and is continuously differentiable in time. 

\begin{theorem}\label{thm:error}
Under the above smoothness assumptions on exact tuple solution $(u,v,p,q)$ and with the stabilization parameters chosen according to Assumption \ref{ass:stab}, the error between the exact solution and the semi-discrete HDG solution $(u_h,v_h,p_h,q_h)$ satisfies
\begin{equation}\label{errest}
\|u-u_h\|_{L^2(\Omega)} \le C h^{k+1/2}, \qquad k\geq 1,
\end{equation}
provided $h$ is sufficiently small, where constant $C$ may depend on the exact solution, the polynomial degree $k$, the parameters $\beta,\gamma$, the final time $T$, and the mesh regularity, but is independent of the mesh size $h$.
\end{theorem}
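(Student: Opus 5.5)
We mimic the proof of Theorem~\ref{thm:stability} at the level of the projected errors. In the error equations \eqref{eq:error-proj} we take
\[
\phi_u=\xi^u,\quad \phi_v=-\gamma\xi^v,\quad \phi_p=-\xi^q,\quad \phi_q=\xi^p,
\]
add the four equations and sum over all $I_i$. Integration by parts, together with the error transmission conditions \eqref{eq:global-transmissionxi} and the homogeneous boundary data for the errors, should then produce an identity that mirrors \eqref{identity_31}:
\[
\tfrac12\tfrac{d}{dt}\|\xi^u\|^2_{\mathcal T_h}+\mathcal B(\xi)=\mathcal N+\mathcal R .
\]
The three pieces are as follows.
\begin{itemize}
\item $\mathcal B(\xi)$ is the nonnegative jump form. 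It contains $\tau_{pu}\langle(\widehat\xi^u-\xi^u)^2,1\rangle$, the $\gamma$-terms in $(\widehat\xi^v-\xi^v)^2$ and the $\beta$-terms in $(\widehat\xi^q-\xi^q)^2$. Under Assumption~\ref{ass:stab} it is bounded below by $c$ times the sum of these jump norms.
\item $\mathcal R$ collects the projection-error terms coming from the $\rho$-parts in \eqref{qv_nume_tracexi}--\eqref{num_p_tracexi}. The volume $\rho$-terms disappear by $L^2$-orthogonality, because $\partial_x\phi\in P_{k-1}$.
\item $\mathcal N$ collects the nonlinear volume and boundary terms.
\end{itemize}

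The linear remainder $\mathcal R$ consists only of skeleton pairings. Typical examples are $\langle\rho^p-\tau_{pu}\rho^u n,(\widehat\xi^u-\xi^u)n\rangle$, the analogous $\rho^v,\rho^q$ terms paired with the $v,q$ jumps, and boundary point values. Each is bounded by Cauchy--Schwarz and Young,
\[
|\mathcal R|\le \delta\bar C\,(\text{jump norms})^2+C\delta^{-1}\|\rho\|^2_{\partial\mathcal T_h}.
\]
By Lemma~\ref{lem:approx_2d} the last term is $O(h^{2k+1})$. The $\delta$-part is absorbed into $\mathcal B$ because $c-\bar C\delta\ge0$; this is exactly the constant $\bar C$ referred to in Assumption~\ref{ass:stab}.

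The main obstacle is the nonlinear term
\[
\mathcal N=(f(u)-f(u_h),\partial_x\xi^u)_{\mathcal T_h}-\langle f(u)-\widehat{f(u_h)},(\widehat\xi^u-\xi^u)n\rangle_{\partial\mathcal T_h}.
\]
The $\widehat\xi^u$ part comes in through the transmission condition. We handle it in the spirit of Xu--Shu, using a Taylor expansion
\[
f(u)-f(u_h)=f'(u)e_u-\tfrac{\alpha}{2}e_u^2,
\]
which is exact for the quadratic flux.

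For the linear part, $(f'(u)\xi^u,\partial_x\xi^u)$ is integrated by parts. This gives $-\tfrac12(f'(u)_x\xi^u,\xi^u)$, which is bounded by $C\|\xi^u\|^2$, plus boundary terms $\tfrac12\langle f'(u)(\xi^u)^2,n\rangle$. The boundary terms are recombined with the numerical flux. Writing $\widehat{f(u_h)}=f(u_h)-\tau_f(\widehat u_h-u_h)n$, the combination of these boundary terms and $\langle\tau_f(\widehat u_h-u_h),\widehat\xi^u-\xi^u\rangle$ produces the jump in $\xi^u$ weighted by $\tau_f-\tilde\tau$. Any remaining indefinite part, of size $\sup|f'|$ times the jumps, is absorbed by choosing $\tilde C$ large. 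The term $(f'(u)\rho^u,\partial_x\xi^u)$ is treated by writing $f'(u)=P_0f'(u)+(f'(u)-P_0f'(u))$. Orthogonality removes the first part, and the second is $O(h)$, so together with the inverse inequality of Lemma~\ref{lem:inverse_2d} the term is bounded by $Ch^{k+1}\|\xi^u\|$. The $\rho^u$ boundary contributions are bounded by $Ch^{k+1/2}$ times the jumps.

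For the quadratic part we use the a priori assumption \eqref{ass1uu_h}--\eqref{ass1uu_h2}, $\|e_u\|_{L^\infty}\le Ch^{1/2}$, together with the inverse inequalities. This gives
\[
|(e_u^2,\partial_x\xi^u)|\le C\|e_u\|_{L^\infty}\|e_u\|\,h^{-1}\|\xi^u\|\le C\|\xi^u\|^2+Ch^{2k+1},
\]
and the boundary quadratic terms are handled similarly, with Young's inequality sending the jump factors into $\mathcal B$. The restriction $k\ge1$ is used at this point, and this is the step where the loss of half an order occurs.

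Collecting everything gives
\[
\tfrac{d}{dt}\|\xi^u\|^2\le C\|\xi^u\|^2+Ch^{2k+1}.
\]
The initial error vanishes, $\xi^u(0)=0$, because $u_h(0)=Pu_0$. Gronwall's inequality then yields $\|\xi^u\|\le Ch^{k+1/2}$ on $[0,T]$, and the triangle inequality with Lemma~\ref{lem:approx_2d} gives \eqref{errest}. Finally, the a priori assumption is closed by a standard continuity argument: $Ch^{k+1/2}<h$ for small $h$ when $k\ge1$, so the time up to which \eqref{ass1uu_h} holds cannot be smaller than $T$.
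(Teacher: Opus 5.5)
You follow the paper's route step by step: the same test functions, the same jump form, the same Taylor splitting of $f(u)-f(u_h)$, a piecewise-constant surrogate for $f'(u)$ to remove $(f'(u)\rho^u,\partial_x\xi^u)_{\mathcal T_h}$, Gronwall with $\xi^u(0)=0$, and a bootstrap. However, the second inequality in your quadratic estimate does not hold. By \eqref{ass1uu_h2} the a priori hypothesis only gives $\|e_u\|_{L^\infty}\le Ch^{1/2}$, so
\[
C\|e_u\|_{L^\infty}\|e_u\|_{\mathcal T_h}\,h^{-1}\|\xi^u\|_{\mathcal T_h}\le Ch^{-1/2}\bigl(\|\xi^u\|_{\mathcal T_h}+\|\rho^u\|_{\mathcal T_h}\bigr)\|\xi^u\|_{\mathcal T_h}.
\]
The right-hand side contains $Ch^{-1/2}\|\xi^u\|_{\mathcal T_h}^2$, so Gronwall produces a factor $e^{CTh^{-1/2}}$ and the estimate is lost. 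The paper's bound for $f_2$ is obtained the same way and tacitly needs $\|u-u_h\|_{L^\infty}=O(h)$, so it does not supply this step either. Your inequality would hold under the stronger hypothesis $\|u-u_h\|_{\mathcal T_h}\le h^{3/2}$, but the continuity argument recovers that only for $k\ge 2$. Two smaller points. First, the half order is lost in the skeleton projection terms $\|\rho\|_{\partial\mathcal T_h}^2=O(h^{2k+1})$, not in the quadratic term. Second, the skeleton term in $\mathcal N$ carries a plus sign, because it comes from adding the transmission condition with $\mu_p=\widehat\xi^u$; with your minus sign the $\tau_f$ contribution would be anti-dissipative.

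The missing idea is to treat the pure cubic part exactly instead of through inverse inequalities. Split $e_u^2=(\xi^u)^2-2\xi^u\rho^u+(\rho^u)^2$. The parts containing $\rho^u$ are harmless, since $h^{-1}\|\rho^u\|_{L^\infty}\le C$ for $k\ge1$. For the remaining part, use $((\xi^u)^2,\partial_x\xi^u)_{\mathcal T_h}=\frac13\langle(\xi^u)^3,n\rangle_{\partial\mathcal T_h}$ and $\langle(\widehat\xi^u)^3,n\rangle_{\partial\mathcal T_h}=0$ (since $\widehat\xi^u$ is single-valued and vanishes at $x_L,x_R$). This gives
\[
-\tfrac{\alpha}{2}\bigl((\xi^u)^2,\partial_x\xi^u\bigr)_{\mathcal T_h}-\tfrac{\alpha}{2}\bigl\langle(\xi^u)^2,(\widehat\xi^u-\xi^u)n\bigr\rangle_{\partial\mathcal T_h}=\tfrac{\alpha}{6}\bigl\langle(\widehat\xi^u-\xi^u)^2(2\xi^u+\widehat\xi^u),n\bigr\rangle_{\partial\mathcal T_h}.
\]
Now add the term $-\frac12\langle f'(u)n,(\widehat\xi^u-\xi^u)^2\rangle_{\partial\mathcal T_h}$ from your linear recombination and the term $-\langle\tau_f,(\widehat\xi^u-\xi^u)^2\rangle_{\partial\mathcal T_h}$. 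For $f(u)=\frac{\alpha}{2}u^2$ one has $\tilde\tau=-\frac{\alpha}{6}(\widehat u_h+2u_h)n$, so the total is exactly $-\langle\tau_f-\tilde\tau-\frac{\alpha}{3}\rho^u n,(\widehat\xi^u-\xi^u)^2\rangle_{\partial\mathcal T_h}$. Assumption~\ref{ass:stab} controls this for small $h$; the $O(h^{k+1/2})$ perturbation from $\rho^u$ is absorbed by the $\tau_{pu}$ jump term. So the weight $\tau_f-\tilde\tau$ you anticipate does appear, but only once the cubic term is included in the recombination (the linear part alone gives $\tau_f+\frac12 f'(u)n$), and no $L^\infty$ smallness of $e_u$ is needed for it. With this change the rest of your argument goes through.
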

\begin{proof}
We follow the energy argument used in the stability analysis. Choose the test functions in \eqref{eq:error-proj} as
\[
\phi_u = \xi^u,\quad \phi_v = -\gamma \xi^v,\quad \phi_p = -\xi^q,\quad \phi_q = \xi^p.
\]
Adding the four equations \eqref{eq:error-proj1}--\eqref{eq:error-proj4} with these test functions and integration by parts yields
\begin{align*}
&\frac12\frac{d}{dt}\|\xi^u\|_{\mathcal{T}_h}^2 +\bigl\langle \xi^p,\xi^u n \bigr\rangle_{\partial\mathcal{T}_h}  
- \bigl\langle \widehat{\xi}^p,\xi^u n \bigr\rangle_{\partial\mathcal{T}_h}- \bigl\langle \widehat{\xi}^u,\xi^p n \bigr\rangle_{\partial\mathcal{T}_h}  +\frac\gamma2\bigl\langle \xi^v,\xi^v n \bigr\rangle_{\partial\mathcal{T}_h} - \gamma\bigl\langle \widehat{\xi}^v,\xi^v n \bigr\rangle_{\partial\mathcal{T}_h} \\
&\quad- \frac\beta2\bigl\langle \xi^q,\xi^q n \bigr\rangle_{\partial\mathcal{T}_h}  + \beta\bigl\langle \widehat{\xi}^q,\xi^q n \bigr\rangle_{\partial\mathcal{T}_h}   = \bigl(f(u)-f(u_h),\partial_x\xi^u\bigr)_{\mathcal{T}_h}-\bigl\langle f(u)-\widehat{f(u_h)},\xi^u n \bigr\rangle_{\partial\mathcal{T}_h}.
\end{align*}
Now using the conditions \eqref{eq:global-transmissionxi}, we obtain
\begin{align*}
&\frac12\frac{d}{dt}\|\xi^u\|_{\mathcal{T}_h}^2 
+ \bigl\langle (\widehat{\xi}^p-\xi^p),(\widehat \xi^u-\xi^u) n \bigr\rangle_{\partial\mathcal{T}_h} +\frac\gamma2 \widehat \xi^v(x_L)^2+\frac\gamma2\bigl\langle (\widehat\xi^v-\xi^v)^2, n \bigr\rangle_{\partial\mathcal{T}_h} \\
&\quad+\frac\beta2 \widehat \xi^q(x_R)^2-\frac\beta2\bigl\langle (\widehat\xi^q-\xi^q)^2, n \bigr\rangle_{\partial\mathcal{T}_h}  = \bigl(f(u)-f(u_h),\partial_x\xi^u\bigr)_{\mathcal{T}_h}+\bigl\langle f(u)-\widehat{f(u_h)},(\widehat \xi^u -\xi^u)n \bigr\rangle_{\partial\mathcal{T}_h}.
\end{align*}
Utilizing definitions of traces \eqref{qv_nume_tracexi}-\eqref{num_p_tracexi} and \eqref{num_f_trace}, we get
\begin{align*}
&\frac12\frac{d}{dt}\|\xi^u\|_{\mathcal{T}_h}^2 +\tau_{pu}\bigl\langle(\widehat \xi^u-\xi^u)^2,1\bigr\rangle_{\partial\mathcal{T}_h}  
+\frac\gamma2\widehat{\xi}^v(x_L)^2 +\frac\gamma2\langle(\widehat\xi^v-\xi^v)^2, n\rangle_{\mathcal I^-} +\frac\gamma2\tau_{vq}^2\langle(\widehat\xi^q-\xi^q)^2, n\rangle_{\mathcal I^+}\\&\qquad +\frac\beta2\widehat{\xi}^q(x_R)^2 -\frac\beta2\langle(\widehat\xi^q-\xi^q)^2, n\rangle_{\mathcal I^+} -\frac\beta2\tau_{qv}^2\langle(\widehat\xi^v-\xi^v)^2, n\rangle_{\mathcal I^-}\\&=\bigl(f(u)-f(u_h),\partial_x\xi^u\bigr)_{\mathcal{T}_h}+\bigl\langle f(u)-f(u_h),(\widehat \xi^u -\xi^u)n \bigr\rangle_{\partial\mathcal{T}_h} + \bigl\langle \tau_f (\widehat u_h-u_h),(\widehat \xi^u -\xi^u)\bigr\rangle_{\partial\mathcal{T}_h}\\& \qquad + \bigl\langle (\rho^p- \tau_{pu}\rho^un), (\widehat \xi^u -\xi^u)n\bigl\rangle_{\partial\mathcal{T}_h}  +\frac\beta2\langle(\rho^q-\tau_{qv}\rho^v)^2, n\rangle_{\mathcal I^-} - \frac\gamma2\langle(\rho^v-\tau_{vq}\rho^q)^2, n\rangle_{\mathcal I^+} \\& \qquad- \beta \langle(\rho^q-\tau_{qv}\rho^v), \tau_{qv}(\widehat\xi^v-\xi^v)n\rangle_{\mathcal I^-}   + \gamma \langle(\rho^v-\tau_{vq}\rho^q), \tau_{vq}(\widehat\xi^q-\xi^q)n\rangle_{\mathcal I^+}.
\end{align*}
Employing the Assumption \ref{ass:stab} and ignoring some positive terms from the left hand side, we have 
\begin{align}\label{auxineq1}
&\nonumber\frac12\frac{d}{dt}\|\xi^u\|_{\mathcal{T}_h}^2 +c(\|\widehat \xi^u -\xi^u\|_{\partial\mathcal T_h}^2 +  \|\widehat \xi^q -\xi^q\|_{\mathcal I^-}^2 + \|\widehat \xi^v -\xi^v\|_{\mathcal I^+}^2) \leq \bigl(f(u)-f(u_h),\partial_x\xi^u\bigr)_{\mathcal{T}_h} \\& \nonumber \qquad+\bigl\langle f(u)-f(u_h),(\widehat \xi^u -\xi^u)n \bigr\rangle_{\partial\mathcal{T}_h} - \bigl\langle \tau_f (\widehat \xi^u-\xi^u +\rho^u),(\widehat \xi^u -\xi^u)\bigr\rangle_{\partial\mathcal{T}_h} \\& \nonumber\qquad + \bigl\langle (\rho^p- \tau_{pu}\rho^un), (\widehat \xi^u -\xi^u)n\bigl\rangle_{\partial\mathcal{T}_h}  +\frac\beta2\langle(\rho^q-\tau_{qv}\rho^v)^2, n\rangle_{\mathcal I^-} - \frac\gamma2\langle(\rho^v-\tau_{vq}\rho^q)^2, n\rangle_{\mathcal I^+} \\& \qquad- \beta \langle(\rho^q-\tau_{qv}\rho^v), \tau_{qv}(\widehat\xi^v-\xi^v)n\rangle_{\mathcal I^-}   + \gamma \langle(\rho^v-\tau_{vq}\rho^q), \tau_{vq}(\widehat\xi^q-\xi^q)n\rangle_{\mathcal I^+}.
\end{align}
Now we estimate the right hand side using the Taylor's expansion, projection estimates, and trace definition \eqref{num_f_trace}. We write
\begin{align*}
  f(u)-f(u_h)= f'(u)(u-u_h)-\frac12\,f''_{\!u}\,(u-u_h)^2
   =  f'(u)(\xi^u - \rho^u)-\frac12\,f''_{\!u}\,(\xi^u - \rho^u)^2, 
\end{align*}
so that the nonlinear part of the right hand side of the inequality \eqref{auxineq1} becomes
\begin{equation}\label{H_ij2}
    \begin{split}
       &\bigl(f(u)-f(u_h),\partial_x\xi^u\bigr)_{\mathcal{T}_h}+\bigl\langle f(u)-f(u_h),(\widehat \xi^u -\xi^u)n \bigr\rangle_{\partial\mathcal{T}_h} - \bigl\langle \tau_f (\widehat \xi^u-\xi^u +\rho^u),(\widehat \xi^u -\xi^u)\bigr\rangle_{\partial\mathcal{T}_h}
         \\& \quad= \underbrace{\bigl(f'(u)(\xi^u - \rho^u),\partial_x\xi^u\bigr)_{\mathcal T_h}  + \langle f'(u) (\xi^u-\rho^u),n(\widehat \xi^u -\xi^u)\rangle_{\partial \mathcal T_h}}_{f_1}\\&\qquad \underbrace{-\Bigl(\frac12\,f''_{\!u}\,(\xi^u - \rho^u)^2,\partial_x\xi^u\Bigr)_{\mathcal T_h}-\Big\langle \frac12\, f''_{\!u}\,
  ( \xi^u-\rho^u)^2,n(\widehat \xi^u -\xi^u)\Big\rangle_{\partial \mathcal T_h}}_{f_2}\\&\qquad \underbrace{-\langle \tau_f (\widehat \xi^u - \xi^u +\rho^u),(\widehat \xi^u-\xi^u)\rangle_{\partial \mathcal T_h}}_{f_3}.
    \end{split}
\end{equation}
Incorporating interpolation \eqref{eq:proj-est} and inverse inequalities \eqref{invesre}, integration by parts, Young's inequality, and an {\it a priori} assumption \eqref{ass1uu_h}  with  $|u-\bar u|_{I_i} = \mathcal O(h)$ on each $I_i$ for some constant $\bar u$, we have 
\begin{equation*}
    \begin{aligned}
        f_1 &= \bigl(f'(u)(\xi^u - \rho^u),\partial_x\xi^u\bigr)_{\mathcal T_h}  + \langle f'(u) (\xi^u-\rho^u),n(\widehat \xi^u -\xi^u)\rangle_{\partial \mathcal T_h} \\
        &= \Bigl(f'(u),\frac{1}{2}\partial_x(\xi^u)^2\Bigr)_{\mathcal T_h} -\bigl(f'(u) \rho^u,\partial_x\xi^u\bigr)_{\mathcal T_h} + \langle f'(u) (\xi^u-\rho^u),n(\widehat \xi^u -\xi^u)\rangle_{\partial \mathcal T_h}\\& = 
        -\Bigl( f''(u)u_x,\frac{1}{2}(\xi^u)^2\Bigr)_{\mathcal T_h} + \Bigl\langle (f'(u)-f'(\bar u)),n\frac{1}{2}(\xi^u)^2\Bigr\rangle_{\partial \mathcal T_h} +\Bigl\langle f'(\bar u),n\frac{1}{2}(\xi^u)^2\Bigr\rangle_{\partial \mathcal T_h} \\& \quad -\bigl( (f'(u)-f'(\bar u))\rho^u,\partial_x\xi^u\bigr)_{\mathcal T_h}  + \langle (f'(u)-f'(\bar u))( \xi^u-\rho^u),n(\widehat \xi^u -\xi^u)\rangle_{\partial \mathcal T_h} \\&\quad+ \langle f'(\bar u)( \xi^u-\rho^u),n(\widehat \xi^u -\xi^u)\rangle_{\partial \mathcal T_h}
        - \Bigl\langle f'(\bar u),n\frac{1}{2}(\widehat \xi^u)^2\Bigr\rangle_{\partial \mathcal T_h}
        \\& \leq C\|f''(u)\|_{L^{\infty}(\mathcal{T}_h)}\|u_x\|_{L^\infty(\mathcal T_h)}\|\xi^u\|_{\mathcal{T}_h}^2+ Ch^{-\frac{1}{2}}|f'(u)-f'(\bar u)|_{\partial \mathcal T_h}\|\xi^u\|_{\mathcal{T}_h}^2 \\& \quad 
        -\Bigl\langle f'(\bar u),n\frac{1}{2}(\xi^u)^2\Bigr\rangle_{\partial \mathcal T_h}
        - \Bigl\langle f'(\bar u),n\frac{1}{2}(\widehat \xi^u)^2\Bigr\rangle_{\partial \mathcal T_h} + \langle f'(\bar u) \xi^u,n\widehat \xi^u \rangle_{\partial \mathcal T_h}\\& \quad + h^{-1}|f'(u)-f'(\bar u)|_{\mathcal T_h}\|\rho^u\|_{\mathcal T_h}\|\xi^u\|_{\mathcal T_h}\\&\quad + h^{-\frac{1}{2}}|f'(u)-f'(\bar u)|_{\partial \mathcal T_h}(\|\rho^u\|_{\mathcal T_h}+\|\xi^u\|_{\mathcal T_h})\|\widehat\xi^u - \xi^u\|_{ \partial \mathcal T_h}  - \langle f'(\bar u)\rho^u,n(\widehat \xi^u -\xi^u)\rangle_{\partial \mathcal T_h}
        \\ & 
       \leq C(\delta)\|\xi^u\|_{\mathcal{T}_h}^2 + \tfrac12\bigl\lvert f'( \bar u)\bigr\rvert\|\widehat\xi^u - \xi^u\|^2_{\partial \mathcal T_h} + Ch^{2k+1} +C \|\widehat\xi^u - \xi^u\|^2_{ \partial \mathcal T_h},
       \end{aligned}
       \end{equation*} 
\begin{equation*}
     \begin{aligned}
       f_2 &= -\bigl(\frac12\,f''_{\!u}\,(\xi^u - \rho^u)^2,\partial_x\xi^u\bigr)_{\mathcal T_h}-\langle \frac12\, f''_{\!u}\,
  ( \xi^u-\rho^u)^2,n(\widehat \xi^u -\xi^u)\rangle_{\partial \mathcal T_h} \\
       &\leq C h^{-1}|u-u_h|_{\mathcal T_h}\|\xi^u-\rho^u\|_{\mathcal{T}_h}\|\xi^u\|_{\mathcal T_h}+ C h^{-\frac{1}{2}}|u-u_h|_{\partial \mathcal T_h}\|\xi^u-\rho^u\|_{\mathcal{T}_h}\|\widehat \xi^u-\xi^u\|_{\partial \mathcal T_h} \\
       &\leq Ch^{2k+2} + C\|\xi^u\|_{ \mathcal T_h}^2+ C\|\widehat\xi^u - \xi^u\|^2_{ \partial \mathcal T_h}, 
       \end{aligned}
\end{equation*}
       \begin{equation*}
     \begin{aligned}
       f_3 = -\langle \tau_f (\widehat \xi^u - \xi^u +\rho^u),(\widehat \xi^u-\xi^u)\rangle_{\partial \mathcal T_h} = -\langle \tau_f ,(\widehat \xi^u-\xi^u)^2\rangle_{ \partial \mathcal T_h} - \langle \tau_f \rho^u,(\widehat \xi^u-\xi^u)\rangle_{\partial \mathcal T_h}.
    \end{aligned}
\end{equation*}
Hence, substituting the values of $f_i's$ into the equation \eqref{H_ij2} and employing the Young's inequality, we have 
\begin{equation}\label{aux2}
    \begin{split}
        f_1+f_2+f_3\leq - \Big\langle \tau_f - \frac{1}{2}\lvert f'(\bar u)\rvert, (\widehat \xi^u-\xi^u)^2\Big\rangle_{\partial \mathcal T_h} +C(\delta)h^{2k+1} + C\|\xi^u\|_{ \mathcal T_h}^2+ \frac{\bar C\delta}{2}\|\widehat\xi^u - \xi^u\|^2_{ \partial \mathcal T_h}.
    \end{split}
\end{equation}
The remaining parts of \eqref{auxineq1} estimated using Young's inequality and projections estimates \eqref{eq:proj-est}. Thus we have
\begin{align}\label{aux3}
    &\nonumber\bigl\langle (\rho^p- \tau_{pu}\rho^un), (\widehat \xi^u -\xi^u)n\bigl\rangle_{\partial\mathcal T_h}+\frac\beta2\langle(\rho^q-\tau_{qv}\rho^v)^2, n\rangle_{\mathcal I^-} - \frac\gamma2\langle(\rho^v-\tau_{vq}\rho^q)^2, n\rangle_{\mathcal I^+} \\& \nonumber \quad- \beta \langle(\rho^q-\tau_{qv}\rho^v), \tau_{qv}(\widehat\xi^v-\xi^v)n\rangle_{\mathcal I^-}   + \gamma \langle(\rho^v-\tau_{vq}\rho^q), \tau_{vq}(\widehat\xi^q-\xi^q)n\rangle_{\mathcal I^+}\\& \leq Ch^{2k+1} + \frac{\bar C\delta}{2} \|\widehat \xi^u -\xi^u\|_{\partial\mathcal T_h}^2 + \bar C \delta( \|\widehat \xi^q -\xi^q\|_{\mathcal I^-}^2 + \|\widehat \xi^v -\xi^v\|_{\mathcal I^+}^2).
\end{align}
Since we have $-(\tau_f -\frac{1}{2}\lvert f'(c)\rvert) \leq 0$ and $c-\bar C\delta \geq 0$ from Assumption \ref{ass:stab}, therefore, estimates \eqref{auxineq1}, \eqref{aux2}, and \eqref{aux3} together gives
\begin{align}\label{auxineq12}
&\frac12\frac{d}{dt}\|\xi^u\|_{\mathcal{T}_h}^2 \leq Ch^{2k+1} + C\|\xi^u\|_{ \mathcal T_h}^2,
\end{align}
where $\delta$ can be taken small as it comes from the application of the Young's inequality and we have ignored the positive terms from the left hand side. The generic constants $C$ and $\bar C$ are positive and independent from $h$. Applying the Gronwall's inequality, and using the fact that $\xi^u(0)=0$ (because $u_h(\cdot,0)=Pu_0$), we obtain
\[
\|\xi^u(t)\|_{\mathcal{T}_h}^2 \le C h^{2k+1}, \quad 0\le t\le T.
\]
Finally, by the triangle inequality and projection estimate \eqref{eq:proj-est}, we obtain
\[
\|u-u_h\|_{\mathcal{T}_h} \le \|\xi^u\|_{\mathcal{T}_h} + \|\rho^u\|_{\mathcal{T}_h} \le C h^{k+1/2},
\]
which completes the proof.
\end{proof}
It remains to justify an {\it a priori} assumption \eqref{ass1uu_h}.
The justification of assumption \eqref{ass1uu_h} follows via a continuity argument \cite{XuShu2007}. For $k \geq 1$ and $h$ sufficiently small, $Ch^{k+1/2} < \frac{1}{2}h$, where  $C$ is constant in \eqref{errest} can be exactly determined by using the final time $T$. Define 
\(
t^* := \sup \left\{ t : \|u(t) - u_h(t)\|_{\mathcal{T}_h} \leq h^{2}\right\}.
\)
Initial conditions satisfy $t^* > 0$ by projection properties. Assume $t^* < T$. Continuity implies 
\(
\|u(t^*) - u_h(t^*)\|_{\mathcal{T}_h} = h.
\)
Theorem \ref{thm:error} (valid for $t \leq t^*$ under the assumption) yields
\[
\|u(t^*) - u_h(t^*)\|_{\mathcal{T}_h} \leq Ch^{k+1/2} < \frac{1}{2}h.
\]
This contradicts the equality if $t^*<T$, and hence $t^* \geq T$.  The consequences in \eqref{ass1uu_h2} follow from inverse and interpolation inequalities \eqref{invesre}, and \eqref{ass1uu_h}.

\section{Time discretization}\label{sec:time}

We discretize the semi-discrete HDG formulation \eqref{eq:hdg-local}--\eqref{eq:global-transmission}
in time by an implicit $\theta$-method. Let $\Delta t>0$ be a fixed time step, $t_n:=n\Delta t$,
and denote by $(u_h^n,v_h^n,p_h^n,q_h^n)$ and by the hybrid variables
$(\widehat u_h^n,\widehat v_h^n,\widehat q_h^n)$ the approximations at time $t_n$.
We introduce the standard difference quotient
\[
\delta_t u_h^{n+1} := \frac{u_h^{n+1}-u_h^n}{\Delta t},
\]
and the intermediate (convex-combination) level $t^{n+\theta}:=t_n+\theta\Delta t$, $\theta\in[1/2,1]$,
together with
\[
w_h^{n+\theta}:=(1-\theta)w_h^n+\theta w_h^{n+1},
\qquad
\widehat w_h^{n+\theta}:=(1-\theta)\widehat w_h^n+\theta \widehat w_h^{n+1},
\]
for $w\in\{u,v,p,q\}$.

\subsection{Fully discrete HDG scheme}\label{subsec:theta_scheme}
Given $(u_h^n,v_h^n,p_h^n,q_h^n)$ and $(\widehat u_h^n,\widehat v_h^n,\widehat q_h^n)$ at time $t_n$,
we compute $(u_h^{n+1},v_h^{n+1},p_h^{n+1},q_h^{n+1})$ and
$(\widehat u_h^{n+1},\widehat v_h^{n+1},\widehat q_h^{n+1})$ by imposing the HDG equations at the intermediate
level $n+\theta$ and replacing the time derivative by $\delta_t u_h^{n+1}$.
More precisely, we seek
\[
(u_h^{n+1},q_h^{n+1},p_h^{n+1},v_h^{n+1})\in [V_h^k]^4,\quad
\widehat u_h^{n+1}\in M_h(0),\ \widehat v_h^{n+1}\in \widetilde M_h^R,\ \widehat q_h^{n+1}\in \widetilde M_h^L,
\]
such that for all test functions
$(\phi_u,\phi_q,\phi_p,\phi_v)\in [V_h^k]^4$, $\mu_v\in \widetilde M_h^R$,
$\mu_q\in \widetilde M_h^L$, and $\mu_p\in M_h(0)$,
\begin{subequations}\label{eq:global-hdg-theta}
\begin{align}
(\delta_t u_h^{n+1},\phi_u)_{\mathcal T_h}
&+ (p_h^{n+\theta},\partial_x\phi_u)_{\mathcal T_h}
- (f(u_h^{n+\theta}),\partial_x\phi_u)_{\mathcal T_h}
- \gamma(v_h^{n+\theta},\phi_u)_{\mathcal T_h}
\nonumber\\
&\qquad
- \langle \widehat{p}_h^{n+\theta}-\widehat{f(u_h^{n+\theta})},\phi_u\,n\rangle_{\partial\mathcal T_h}=0,
\label{eq:theta1}\\[2mm]
-(v_h^{n+\theta},\partial_x\phi_v)_{\mathcal T_h}
&-(u_h^{n+\theta},\phi_v)_{\mathcal T_h}
+\langle \widehat v_h^{\,n+\theta},\phi_v\,n\rangle_{\partial\mathcal T_h}=0,
\label{eq:theta2}\\[2mm]
(p_h^{n+\theta},\phi_p)_{\mathcal T_h}
&+\beta(q_h^{n+\theta},\partial_x\phi_p)_{\mathcal T_h}
-\beta\langle \widehat q_h^{\,n+\theta},\phi_p\,n\rangle_{\partial\mathcal T_h}=0,
\label{eq:theta3}\\[2mm]
(q_h^{n+\theta},\phi_q)_{\mathcal T_h}
&+(u_h^{n+\theta},\partial_x\phi_q)_{\mathcal T_h}
-\langle \widehat u_h^{\,n+\theta},\phi_q\,n\rangle_{\partial\mathcal T_h}=0.
\label{eq:theta4}
\end{align}
\end{subequations}
The transmission conditions are imposed at time level $n+\theta$:
\begin{subequations}\label{eq:theta_trans}
\begin{align}
\langle \widehat v_h^{\,n+\theta},\mu_v\,n\rangle_{\partial\mathcal T_h}
&=\langle v_R^{\,n+\theta},\mu_v\,n\rangle_{\{x_R\}},\\
\langle \widehat q_h^{\,n+\theta},\mu_q\,n\rangle_{\partial\mathcal T_h}
&=\langle q_L^{\,n+\theta},\mu_q\,n\rangle_{\{x_L\}},\\
\langle \widehat{p}_h^{n+\theta}-\widehat{f(u_h^{n+\theta})},\mu_p\,n\rangle_{\partial\mathcal T_h}
&=0,
\end{align}
\end{subequations}
for all $\mu_v \in \widetilde M_h^L$, $\mu_q \in \widetilde M_h^R$, and $\mu_v \in  M_h(0)$.
The numerical traces at $n+\theta$ are defined exactly as in the semi-discrete scheme,
but evaluated at the intermediate level in $I_i$:
\begin{subequations}\label{eq:theta_traces}
\begin{align}
\widehat{v}_h^{\,n+\theta} &= v_h^{n+\theta}+\tau_{vq}(\widehat{q}_h^{\,n+\theta}-q_h^{n+\theta})n,
&&\text{on }x_{i-1},
\\
\widehat{q}_h^{\,n+\theta} &= q_h^{n+\theta}+\tau_{qv}(\widehat{v}_h^{\,n+\theta}-v_h^{n+\theta})n,
&&\text{on }x_i,
\\
\widehat{p}_h^{\,n+\theta} &= p_h^{n+\theta}+\tau_{pu}(\widehat{u}_h^{\,n+\theta}-u_h^{n+\theta})n,
&&\text{on } \partial I_i,
\\
\widehat{f(u_h^{n+\theta})} &= f(u_h^{n+\theta})-\tau_f(\widehat{u}_h^{\,n+\theta}-u_h^{n+\theta})n,
&&\text{on }\partial I_i,
\end{align}
\end{subequations}
and the boundary conditions for each time level can be defined by \eqref{eq:physical-bc_num}. 
Since $f(u_h^{n+\theta})$ is nonlinear, \eqref{eq:global-hdg-theta}--\eqref{eq:theta_traces}
defines a nonlinear system at each time step. In practice we solve it by Newton's method or a
damped fixed-point iteration, exploiting the HDG static condensation: for given traces
$(\widehat u_h^{n+\theta},\widehat v_h^{n+\theta},\widehat q_h^{n+\theta})$, the element unknowns
$(u_h^{n+\theta},v_h^{n+\theta},p_h^{n+\theta},q_h^{n+\theta})$ are obtained locally, and the global
coupling occurs only through the transmission conditions.

We set $u_h^0:=Pu_0$. The remaining fields $(v_h^0,p_h^0,q_h^0)$ and traces can be obtained by
solving the local mixed relations $q=u_x$, $p=\beta q_x$, $v_x=u$ which are enforced in the HDG sense \eqref{eq:hdg-local} at $t=0$ with $u_h$ fixed at $u_h^0$.

\subsection{Discrete energy stability}\label{subsec:fully_discrete_stab}
Define the discrete energy
\[
E_h^n:=\frac12\|u_h^n\|_{\mathcal T_h}^2.
\]
The fully discrete stability proof follows the semi-discrete argument verbatim, except that the
time derivative term is handled by the standard $\theta$-identity below.

For any $a,b$ in an inner-product space and $\theta\in[0,1]$,
\begin{equation}\label{theta_identity}
    (b-a,(1-\theta)a+\theta b)=\frac12\Big(\|b\|^2-\|a\|^2+(2\theta-1)\|b-a\|^2\Big).
\end{equation}

\begin{theorem}[Fully discrete stability]\label{thm:fd_stability}
Assume homogeneous boundary data and Assumption~\ref{ass:stab} on the stabilization parameters.
Then the fully discrete scheme \eqref{eq:global-hdg-theta}--\eqref{eq:theta_traces} satisfies the
energy inequality
\[
E_h^{n+1}-E_h^n + \frac{2\theta-1}{2}\|u_h^{n+1}-u_h^n\|_{\mathcal T_h}^2
+ \Delta t\,\mathcal D_h^{n+\theta}\ \le\ 0,
\qquad \forall n\ge 0,
\]
where $\mathcal D_h^{n+\theta}\ge 0$ is the same (trace-based) dissipation functional as in the
semi-discrete identity \eqref{identity_31}, evaluated at time level $n+\theta$.
\end{theorem}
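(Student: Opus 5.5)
The plan is to repeat the semi-discrete energy argument of Theorem~\ref{thm:stability} at the intermediate level $n+\theta$. The only new ingredient is the treatment of the discrete time derivative by the $\theta$-identity \eqref{theta_identity}.

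In \eqref{eq:global-hdg-theta} I choose the test functions
\[
\phi_u=u_h^{n+\theta},\quad \phi_v=-\gamma v_h^{n+\theta},\quad \phi_p=-q_h^{n+\theta},\quad \phi_q=p_h^{n+\theta}.
\]
These are admissible because each $w_h^{n+\theta}$ is a convex combination of elements of $V_h^k$. I then add the four equations and sum over all elements.

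\emph{Time-derivative term.} Applying \eqref{theta_identity} with $a=u_h^n$, $b=u_h^{n+1}$ gives
\[
(\delta_t u_h^{n+1},u_h^{n+\theta})_{\mathcal T_h}=\frac{1}{\Delta t}\Bigl(E_h^{n+1}-E_h^n+\frac{2\theta-1}{2}\|u_h^{n+1}-u_h^n\|_{\mathcal T_h}^2\Bigr).
\]
This is the only term that involves two time levels.

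\emph{Spatial terms.} Every other term involves only quantities at level $n+\theta$. The key observation is that \eqref{eq:theta2}--\eqref{eq:theta4}, the transmission conditions \eqref{eq:theta_trans} and the trace definitions \eqref{eq:theta_traces} are exactly the semi-discrete relations with each field replaced by its $(n+\theta)$-version. The traces $\widehat v_h^{n+\theta},\widehat q_h^{n+\theta},\widehat p_h^{n+\theta}$ are affine in the fields at level $n+\theta$. The nonlinear flux is evaluated at $u_h^{n+\theta}$, not averaged. Hence the integration by parts, the use of the transmission conditions and the homogeneous boundary data go through verbatim. The antiderivative $G$ of $f$ is applied to $u_h^{n+\theta}$, and $\tilde\tau$ in \eqref{eq:tildetau} is evaluated at $(\widehat u_h^{n+\theta},u_h^{n+\theta})$. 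This reproduces identity \eqref{identity_31} with $\frac12\frac{d}{dt}\|u_h\|^2$ replaced by the time-difference expression above.

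I define $\mathcal D_h^{n+\theta}$ as the sum of the trace terms in \eqref{identity_31} at level $n+\theta$. These are the $\tau_{pu}$ term, the $(\tau_f-\tilde\tau)$ term, the boundary values $\frac\gamma2\widehat v_h(x_L)^2$ and $\frac\beta2\widehat q_h(x_R)^2$, and the grouped $\mathcal I^\pm$ jump terms with coefficients $\frac\gamma2\tau_{vq}^2-\frac\beta2$ and $\frac\gamma2-\frac\beta2\tau_{qv}^2$. The $\mathcal I^\pm$ orientation signs must be tracked as in the semi-discrete proof. Assumption~\ref{ass:stab}, even with $\tilde C=c=0$, then gives $\mathcal D_h^{n+\theta}\ge0$ pointwise. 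Multiplying the resulting identity by $\Delta t$ yields the claim, in fact with equality.

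\emph{Main obstacle.} I expect the hardest part to be checking that no cross-level terms appear. In particular, the nonlinear flux term $-(f(u_h^{n+\theta}),\partial_x u_h^{n+\theta})$ combined with the $\widehat{f}$ trace must collapse to $\langle(\tau_f-\tilde\tau)(\widehat u_h^{n+\theta}-u_h^{n+\theta})^2,1\rangle$. This holds only because the scheme uses $f(u_h^{n+\theta})$ rather than $(1-\theta)f(u_h^n)+\theta f(u_h^{n+1})$, so it has to be verified against the scheme's definition. I also need that $\tilde\tau$ and $\tau_f$ satisfy Assumption~\ref{ass:stab} at the intermediate state. This is immediate if $\tau_f$ is chosen from the bound \eqref{tildetau} evaluated at that state.

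Finally, for $\theta\in[1/2,1]$ the coefficient $2\theta-1$ is nonnegative. Summing over $n$ therefore gives $E_h^n\le E_h^0$ for all $n\ge0$.
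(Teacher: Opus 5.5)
Your proposal is correct and follows the paper's proof essentially verbatim. You use the same test functions at level $n+\theta$, you reduce all spatial terms to identity \eqref{identity_31} at that level (since the flux and all traces are evaluated at the intermediate state), and you apply the $\theta$-identity \eqref{theta_identity} to the time-difference term, where $2\theta-1\ge 0$ gives the sign. Your version is more explicit than the paper's about why no cross-level terms arise and how the $\mathcal I^\pm$ orientation signs make $\mathcal D_h^{n+\theta}\ge 0$.
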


\begin{proof}
Choosing the test functions:
\[
\phi_u=u_h^{n+\theta},\qquad \phi_v=-\gamma v_h^{n+\theta},\qquad \phi_p=-q_h^{n+\theta},\qquad
\phi_q=p_h^{n+\theta},
\]
in \eqref{eq:global-hdg-theta}, adding \eqref{eq:theta1}--\eqref{eq:theta4}, and applying the similar algebraic manipulations along with multiple integration by parts and transmission conditions give exactly the semi-discrete energy balance \eqref{identity_31} at level $n+\theta$,  except for the time term $(\delta_t u_h^{n+1},u_h^{n+\theta})_{\mathcal T_h}$:
\begin{equation*}
    (\delta_t u_h^{n+1},u_h^{n+\theta})_{\mathcal T_h} +\mathcal D_h^{n+\theta} = 0.
\end{equation*}
Furthermore, applying identity \eqref{theta_identity} elementwise yields
\[
(\delta_t u_h^{n+1},u_h^{n+\theta})_{\mathcal T_h}
=\frac{1}{2\Delta t}\Big(\|u_h^{n+1}\|_{\mathcal T_h}^2-\|u_h^n\|_{\mathcal T_h}^2
+(2\theta-1)\|u_h^{n+1}-u_h^n\|_{\mathcal T_h}^2\Big).
\]
A dissipation functional
$\mathcal D_h^{n+\theta}$ controlled by Assumption~\ref{ass:stab} and the boundary conditions. Since $\theta\ge 1/2$, the extra
term $\frac{2\theta-1}{2}\|u_h^{n+1}-u_h^n\|^2$ is nonnegative and the conclusion follows.
\end{proof}

\begin{remark}[Role of $\theta$]
The Crank--Nicolson choice $\theta=\tfrac12$ removes the additional time-dissipation term
$\frac{2\theta-1}{2}\|u_h^{n+1}-u_h^n\|^2$, making the method the least dissipative in time.
For $\theta=1$ (backward Euler), the method adds numerical dissipation in time.
\end{remark}

\subsection{Fully discrete error analysis}\label{subsec:fd_error}

We present the fully discrete error analysis for the $\theta$-scheme
\eqref{eq:global-hdg-theta}--\eqref{eq:theta_traces}. Throughout we assume the stabilization
parameters satisfy Assumption~\ref{ass:stab}. For any sufficiently smooth scalar function
$g(t)$ define the convex combination of its endpoint values
\[
g^{n+\theta,*}:=(1-\theta)g(t_n)+\theta g(t_{n+1}),
\]
and the intermediate-time mismatch
\begin{equation}\label{eq:mu_def_fd}
\mu_g^{n+\theta}:=g^{n+\theta,*}-g(t^{n+\theta}).
\end{equation}
 We also define the time discretization error for $u$ at intermediate step:
\begin{equation}\label{eq:eta_def_fd}
\eta_u^{n+\theta}:=\frac{u(t_{n+1})-u(t_n)}{\Delta t}-u_t(t^{n+\theta})=: \delta_t u(t_{n+1}) - u_t(t^{n+\theta}).
\end{equation}
For the nonlinear flux we set
\begin{equation}\label{eq:Mf_def_fd}
\mathcal M_f^{n+\theta}:= f\big(u(t^{n+\theta})\big)-f\big(u^{n+\theta,*}\big).
\end{equation}
Write $t_*:=t^{n+\theta}$. Using Taylor expansions around $t_*$,
\[
u(t_{n+1}) = u(t_*) + (1-\theta)\Delta t\,u_t(t_*) + \frac{(1-\theta)^2\Delta t^2}{2}\,u_{tt}(\xi_1),
\]
\[
u(t_{n}) = u(t_*) - \theta\Delta t\,u_t(t_*) + \frac{\theta^2\Delta t^2}{2}\,u_{tt}(\xi_0),
\]
for some $\xi_0,\xi_1\in(t_n,t_{n+1})$. Subtracting and dividing by $\Delta t$ gives the explicit
structure
\begin{equation}\label{eq:eta_expansion}
\eta_u^{n+\theta}
=\Big(\tfrac12-\theta\Big)\Delta t\,u_{tt}(t_*) + \mathcal{O}(\Delta t^2),
\end{equation}
where the $\mathcal{O}(\Delta t^2)$ term involves $u_{ttt}$. Therefore, if $\theta\neq \tfrac12$ (in particular $\theta=1$ backward Euler), the leading term in
\eqref{eq:eta_expansion} is proportional to $\Delta t\,u_{tt}$, hence the method is first order in time,
and if $\theta=\tfrac12$ (Crank--Nicolson)
so $\eta_u^{n+\frac12}=\mathcal{O}(\Delta t^2)$ and the method is second order in time.
Thus a simple applications of Taylor's theorem proves the following lemma on time bound estimates.
\begin{lemma}[Time remainder bounds]\label{lem:time_bounds_fd}
Assume $u_{tt}\in L^2(t_n,t_{n+1};L^2(\Omega))$. Then for any $\theta\in[0,1]$,
\begin{align}
\|\eta_u^{n+\theta}\|_{L^2(\Omega)}^2
&\le C\,\Delta t \int_{t_n}^{t_{n+1}}\|u_{tt}(t)\|_{L^2(\Omega)}^2\,dt, \label{eq:eta_bound_fd}\\
\|\mu_u^{n+\theta}\|_{L^2(\Omega)}^2
&\le C\,\Delta t^3 \int_{t_n}^{t_{n+1}}\|u_{tt}(t)\|_{L^2(\Omega)}^2\,dt, \label{eq:mu_bound_fd}
\end{align}
and similarly for $\mu_v^{n+\theta}$, $\mu_p^{n+\theta}$, $\mu_q^{n+\theta}$ under the corresponding time regularity.
Moreover, if $\theta=\tfrac12$ and $u_{ttt}\in L^2(t_n,t_{n+1};L^2(\Omega))$, then
\begin{equation}\label{eq:eta_CN_bound_fd}
\|\eta_u^{n+\frac12}\|_{L^2(\Omega)}^2
\le C\,\Delta t^3 \int_{t_n}^{t_{n+1}}\|u_{ttt}(t)\|_{L^2(\Omega)}^2\,dt.
\end{equation}
If $f$ is locally Lipschitz and $u$ is bounded, then
\begin{equation}\label{eq:Mf_bound_fd}
\|\mathcal M_f^{n+\theta}\|_{L^2(\Omega)} \le C\,\|\mu_u^{n+\theta}\|_{L^2(\Omega)}.
\end{equation}
\end{lemma}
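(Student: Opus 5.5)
The plan is to write each remainder as an integral of $u_{tt}$ (or $u_{ttt}$) against a Peano kernel supported in $[t_n,t_{n+1}]$, bound that kernel pointwise by a power of $\Delta t$, and then apply Cauchy--Schwarz in time followed by Minkowski's integral inequality in space. Throughout we write $t_*=t^{n+\theta}$.

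\textbf{Bound for $\eta_u^{n+\theta}$.} Since $\delta_t u(t_{n+1})=\Delta t^{-1}\int_{t_n}^{t_{n+1}}u_t(s)\,ds$, we have
\[
\eta_u^{n+\theta}=\frac{1}{\Delta t}\int_{t_n}^{t_{n+1}}\bigl(u_t(s)-u_t(t_*)\bigr)\,ds=\frac{1}{\Delta t}\int_{t_n}^{t_{n+1}}\int_{t_*}^{s}u_{tt}(r)\,dr\,ds .
\]
Exchanging the order of integration gives $\eta_u^{n+\theta}=\int_{t_n}^{t_{n+1}}K(r)\,u_{tt}(r)\,dr$. The kernel is $K(r)=(t_{n+1}-r)/\Delta t$ for $r>t_*$ and $K(r)=-(r-t_n)/\Delta t$ for $r<t_*$, so $|K|\le 1$. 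Minkowski's inequality and Cauchy--Schwarz in $r$ then yield
\[
\|\eta_u^{n+\theta}\|_{L^2(\Omega)}^2\le\Bigl(\int_{t_n}^{t_{n+1}}\|u_{tt}\|_{L^2(\Omega)}\,dr\Bigr)^2\le \Delta t\int_{t_n}^{t_{n+1}}\|u_{tt}\|_{L^2(\Omega)}^2\,dr,
\]
which is \eqref{eq:eta_bound_fd}.

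\textbf{Bound for $\mu_u^{n+\theta}$.} We apply Taylor's formula with integral remainder about $t_*$ to both $u(t_{n+1})$ and $u(t_n)$. The first-order terms cancel because $(1-\theta)(t_n-t_*)+\theta(t_{n+1}-t_*)=0$, leaving
\[
\mu_u^{n+\theta}=(1-\theta)\int_{t_*}^{t_n}(t_n-r)u_{tt}(r)\,dr+\theta\int_{t_*}^{t_{n+1}}(t_{n+1}-r)u_{tt}(r)\,dr .
\]
Both kernels are bounded by $\Delta t$, so Cauchy--Schwarz gives $\|\mu_u^{n+\theta}\|^2\le C\Delta t^2\cdot\Delta t\int\|u_{tt}\|^2$, which is \eqref{eq:mu_bound_fd}. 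The same computation, applied verbatim, handles $v$, $p$ and $q$.

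\textbf{Crank--Nicolson bound.} This is the only step requiring care, since we need one extra order from cancellation. Take $\theta=\tfrac12$, so $t_*$ is the midpoint. Expand $u_t(s)-u_t(t_*)=(s-t_*)u_{tt}(t_*)+\int_{t_*}^s(s-r)u_{ttt}(r)\,dr$. The linear term integrates to zero over $[t_n,t_{n+1}]$ by symmetry about the midpoint. What remains is $\Delta t^{-1}\int\int_{t_*}^{s}(s-r)u_{ttt}(r)\,dr\,ds$, whose Peano kernel is bounded by $C\Delta t$. Cauchy--Schwarz as before then gives $C\Delta t^3\int\|u_{ttt}\|^2$, which is \eqref{eq:eta_CN_bound_fd}.

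\textbf{Flux bound.} Since $u$ is bounded, say $|u|\le M$, both $u(t_*)$ and $u^{n+\theta,*}$ (a convex combination of values of $u$) lie in $[-M,M]$. On this compact set $f$ has a Lipschitz constant $L_M$. Hence $|\mathcal M_f^{n+\theta}|\le L_M|\mu_u^{n+\theta}|$ pointwise in $x$, and taking the $L^2$ norm gives \eqref{eq:Mf_bound_fd}. For $f(u)=\tfrac{\alpha}{2}u^2$ one may take $L_M=\alpha M$.
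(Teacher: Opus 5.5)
Your proof is correct. It rests on the same basic idea as the paper, namely Taylor expansion about $t_*=t^{n+\theta}$, but the form of the remainder differs in a way that matters.

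The paper expands $u(t_{n+1})$ and $u(t_n)$ about $t_*$ with Lagrange remainders $u_{tt}(\xi_1)$ and $u_{tt}(\xi_0)$. It extracts the leading term $\eta_u^{n+\theta}=(\tfrac12-\theta)\Delta t\,u_{tt}(t_*)+\mathcal O(\Delta t^2)$ and then simply asserts that Taylor's theorem yields the lemma. That sketch has the advantage of exhibiting the coefficient $(\tfrac12-\theta)$, which directly explains the first- versus second-order rates used later. However, a single intermediate point is not available for $L^2(\Omega)$-valued functions, since it would depend on $x$. A Lagrange remainder also naturally gives bounds in terms of $\sup_t$ of $u_{tt}$, not the $L^2(t_n,t_{n+1};L^2(\Omega))$ integrals in the statement.

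Your route instead uses integral-remainder (Peano-kernel) representations, with kernels bounded by $1$, $\Delta t$ and $C\Delta t$ respectively, followed by Minkowski and Cauchy--Schwarz in time. This produces exactly the stated bounds under exactly the stated regularity. The integral form is also the one that sums cleanly over $n$ in the later discrete Gronwall step.

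Two further points connect your argument to the paper's. Your symmetric cancellation $\int_{t_n}^{t_{n+1}}(s-t_*)\,ds=0$ for $\theta=\tfrac12$ is the integral counterpart of the vanishing coefficient $(\tfrac12-\theta)$. Your Lipschitz argument on $[-M,M]$ for $\mathcal M_f^{n+\theta}$ supplies a step the paper leaves unproved.
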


Define endpoint projection errors and discrete errors (as in the semi-discrete analysis):
\[
e_\omega^n:=\omega(t_n)-\omega_h^n,\qquad
e_\omega^n=\xi_\omega^n-\rho_\omega^n,
\qquad
\xi_\omega^n:=P\omega(t_n)-\omega_h^n,\qquad
\rho_\omega^n:=P\omega(t_n)-\omega(t_n),
\]
for $\omega\in\{u,v,p,q\}$. At the intermediate level we use the convex combinations
\[
\xi_\omega^{n+\theta}:=(1-\theta)\xi_\omega^n+\theta\xi_\omega^{n+1},\qquad
\rho_\omega^{n+\theta}:=(1-\theta)\rho_\omega^n+\theta\rho_\omega^{n+1}.
\]
On the skeleton, define the intermediate-level trace errors using the same convex-combination
reference values:
\begin{equation}\label{eq:hat_xi_def_fd}
\widehat\xi_u^{n+\theta}:=u^{n+\theta,*}-\widehat u_h^{\,n+\theta},\quad
\widehat\xi_v^{n+\theta}:=v^{n+\theta,*}-\widehat v_h^{\,n+\theta},\quad
\widehat\xi_q^{n+\theta}:=q^{n+\theta,*}-\widehat q_h^{\,n+\theta},\quad
\widehat\xi_p^{n+\theta}:=p^{n+\theta,*}-\widehat p_h^{\,n+\theta}.
\end{equation}
(Here $\widehat v_h^{\,n+\theta},\widehat q_h^{\,n+\theta},\widehat p_h^{\,n+\theta}$ are the numerical traces defined by
\eqref{eq:theta_traces}.) Exactly as in \eqref{define_hatxi}--\eqref{num_p_tracexi} of the semi-discrete analysis, the trace
definitions imply the identities
\begin{align}
\widehat\xi_v^{n+\theta}
&=\xi_v^{n+\theta}+\tau_{vq}\big(\widehat\xi_q^{n+\theta}-\xi_q^{n+\theta}\big)n
-\Big(\rho_v^{n+\theta}-\tau_{vq}\rho_q^{n+\theta}n\Big),
&&\text{at }x_{i-1},\label{eq:hatxi_v_id_fd}\\
\widehat\xi_q^{n+\theta}
&=\xi_q^{n+\theta}+\tau_{qv}\big(\widehat\xi_v^{n+\theta}-\xi_v^{n+\theta}\big)n
-\Big(\rho_q^{n+\theta}-\tau_{qv}\rho_v^{n+\theta}n\Big),
&&\text{at }x_i,\label{eq:hatxi_q_id_fd}\\
\widehat\xi_p^{n+\theta}
&=\xi_p^{n+\theta}+\tau_{pu}\big(\widehat\xi_u^{n+\theta}-\xi_u^{n+\theta}\big)n
-\Big(\rho_p^{n+\theta}-\tau_{pu}\rho_u^{n+\theta}n\Big),
&&\text{on }\partial I_i.\label{eq:hatxi_p_id_fd}
\end{align}

Let $\phi_u,\phi_v,\phi_p,\phi_q\in V_h^k$ be arbitrary.
Subtracting the fully discrete scheme
\eqref{eq:global-hdg-theta} (evaluated at $t^{n+\theta}$) from the exact weak identities at time $t^{n+\theta}$
and rewriting all exact quantities in terms of $\,(\cdot)^{n+\theta,*}$ plus the remainders
\eqref{eq:mu_def_fd}--\eqref{eq:Mf_def_fd} yields the following system:

\begin{subequations}\label{eq:fd_error_system}
\begin{align}
(\delta_t \xi_u^{n+1},\phi_u)_{\mathcal T_h}
&+(\xi_p^{n+\theta},\partial_x\phi_u)_{\mathcal T_h}
-\big(f(u^{n+\theta,*})-f(u_h^{n+\theta}),\partial_x\phi_u\big)_{\mathcal T_h}
-\gamma(\xi_v^{n+\theta},\phi_u)_{\mathcal T_h}
\nonumber\\
&\quad
-\langle \widehat\xi_p^{n+\theta},\phi_u\,n\rangle_{\partial\mathcal T_h}
+\big\langle f(u^{n+\theta,*})-\widehat{f(u_h^{n+\theta})},\phi_u\,n\big\rangle_{\partial\mathcal T_h}
= \mathcal R_u^{n+\theta}(\phi_u),
\label{eq:fd_err_u}\\[1mm]
-(\xi_v^{n+\theta},\partial_x\phi_v)_{\mathcal T_h}
&-(\xi_u^{n+\theta},\phi_v)_{\mathcal T_h}
+\langle \widehat\xi_v^{n+\theta},\phi_v\,n\rangle_{\partial\mathcal T_h}
= \mathcal R_v^{n+\theta}(\phi_v),
\label{eq:fd_err_v}\\[1mm]
(\xi_p^{n+\theta},\phi_p)_{\mathcal T_h}
&+\beta(\xi_q^{n+\theta},\partial_x\phi_p)_{\mathcal T_h}
-\beta\langle \widehat\xi_q^{n+\theta},\phi_p\,n\rangle_{\partial\mathcal T_h}
= \mathcal R_p^{n+\theta}(\phi_p),
\label{eq:fd_err_p}\\[1mm]
(\xi_q^{n+\theta},\phi_q)_{\mathcal T_h}
&+(\xi_u^{n+\theta},\partial_x\phi_q)_{\mathcal T_h}
-\langle \widehat\xi_u^{n+\theta},\phi_q\,n\rangle_{\partial\mathcal T_h}
= \mathcal R_q^{n+\theta}(\phi_q),
\label{eq:fd_err_q}
\end{align}
\end{subequations}
where the remainder functionals are fully explicit:
\begin{subequations}\label{eq:fd_remainders}
\begin{align}
\mathcal R_u^{n+\theta}(\phi_u)
&:= (\eta_u^{n+\theta},\phi_u)_{\mathcal T_h}
+(\mu_p^{n+\theta},\partial_x\phi_u)_{\mathcal T_h}
+(\mathcal M_f^{n+\theta},\partial_x\phi_u)_{\mathcal T_h}
-\gamma(\mu_v^{n+\theta},\phi_u)_{\mathcal T_h}
\nonumber\\
&\qquad
-\big\langle \mu_p^{n+\theta}+\mathcal M_f^{n+\theta},\phi_u\,n\big\rangle_{\partial\mathcal T_h},
\label{eq:R_u_fd}\\
\mathcal R_v^{n+\theta}(\phi_v)
&:= -(\mu_v^{n+\theta},\partial_x\phi_v)_{\mathcal T_h}
-(\mu_u^{n+\theta},\phi_v)_{\mathcal T_h}
+\langle \mu_v^{n+\theta},\phi_v\,n\rangle_{\partial\mathcal T_h},
\label{eq:R_v_fd}\\
\mathcal R_p^{n+\theta}(\phi_p)
&:= -(\mu_p^{n+\theta},\phi_p)_{\mathcal T_h}
-\beta(\mu_q^{n+\theta},\partial_x\phi_p)_{\mathcal T_h}
+\beta\langle \mu_q^{n+\theta},\phi_p\,n\rangle_{\partial\mathcal T_h},
\label{eq:R_p_fd}\\
\mathcal R_q^{n+\theta}(\phi_q)
&:= -(\mu_q^{n+\theta},\phi_q)_{\mathcal T_h}
-(\mu_u^{n+\theta},\partial_x\phi_q)_{\mathcal T_h}
+\langle \mu_u^{n+\theta},\phi_q\,n\rangle_{\partial\mathcal T_h}.
\label{eq:R_q_fd}
\end{align}
\end{subequations}
where we have used
\begin{align*}
(u_t(t^{n+\theta}),\phi_u)_{\mathcal T_h}-(\delta_t u_h^{n+1},\phi_u)_{\mathcal T_h}
&=(\delta_t u(t_{n+1}),\phi_u)_{\mathcal T_h}-(\eta_u^{n+\theta},\phi_u)_{\mathcal T_h}-(\delta_t u_h^{n+1},\phi_u)_{\mathcal T_h}\\
& =(\delta_t \xi_u^{n+1},\phi_u)_{\mathcal T_h} -(\eta_u^{n+\theta},\phi_u)_{\mathcal T_h},
\end{align*}
since $P$ is the $L^2$-projection onto $V_h^k$, for any $\phi_u\in V_h^k$, we have
$(\delta_t Pu(t_{n+1}),\phi_u)=(\delta_t u(t_{n+1}),\phi_u)$, similarly for other variable.
Under homogeneous boundary data, the transmission conditions yield the same relations as in the semi-discrete case:
\begin{subequations}\label{eq:fd_trans_error}
\begin{align}
\langle \widehat\xi_v^{n+\theta},\mu_v\,n\rangle_{\partial\mathcal T_h} &= 0,
&&\forall \mu_v\in \widetilde M_h^R,\\
\langle \widehat\xi_q^{n+\theta},\mu_q\,n\rangle_{\partial\mathcal T_h} &= 0,
&&\forall \mu_q\in \widetilde M_h^L,\\
\big\langle \widehat\xi_p^{n+\theta} - \big(f(u^{n+\theta,*})-\widehat{f(u_h^{n+\theta})}\big),\mu_p\,n\big\rangle_{\partial\mathcal T_h} &= 0,
&&\forall \mu_p\in M_h(0).
\end{align}
\end{subequations}

Choose the similar test functions as in the semi-discrete error proof of Theorem \ref{thm:error}, now at level $n+\theta$:
\[
\phi_u=\xi_u^{n+\theta},\qquad
\phi_v=-\gamma\xi_v^{n+\theta},\qquad
\phi_p=-\xi_q^{n+\theta},\qquad
\phi_q=\xi_p^{n+\theta}.
\]
Add \eqref{eq:fd_err_u}--\eqref{eq:fd_err_q} and use the same algebraic manipulations, integration-by-parts,
and transmission-condition arguments as in the semi-discrete case to obtain the discrete error energy identity,
with the only new step being the time term:
\[
(\delta_t \xi_u^{n+1},\xi_u^{n+\theta})_{\mathcal T_h}
=\frac{1}{2\Delta t}\Big(\|\xi_u^{n+1}\|_{\mathcal T_h}^2-\|\xi_u^{n}\|_{\mathcal T_h}^2
+(2\theta-1)\|\xi_u^{n+1}-\xi_u^{n}\|_{\mathcal T_h}^2\Big)
\quad\text{(by \eqref{theta_identity}).}
\]
The right-hand side contributions are exactly the remainder functionals
\eqref{eq:fd_remainders} tested with the chosen test functions and are bounded using
Cauchy--Schwarz and Young's inequality together with Lemma~\ref{lem:time_bounds_fd} and the spatial projection estimates
from the semi-discrete analysis. Summing over $n$ and applying a discrete Gronwall inequality  and combining with the spatial estimate from the semi-discrete analysis (order $h^{k+\frac12}$ ) finally gives
\[
\max_{0\le n\le N}\|u(t_n)-u_h^n\|_{L^2(\Omega)} \le
C\Big(h^{k+\frac12}+\Delta t^{\,r}\Big),
\qquad
r=
\begin{cases}
1, & \theta\neq \tfrac12\ \text{(e.g. backward Euler }\theta=1),\\
2, & \theta=\tfrac12\ \text{(Crank--Nicolson)},
\end{cases}
\]
under the corresponding time regularity assumed in Lemma \ref{lem:time_bounds_fd}.

\section{Numerical examples}\label{sec:numerics}
We now present two numerical examples to verify the convergence of the proposed HDG method. In following examples, the stabilization parameters are chosen according to Assumption \ref{ass:stab}. In particular, we use the (constant) stabilization parameters
\[
\tau_{pu}=2,\qquad 
\tau_{vq}=0.9\sqrt{\beta/\gamma},\qquad 
\tau_{qv}=0.9\sqrt{\gamma/\beta},\qquad 
\tau_f=2.
\]

\subsection*{Example 6.1. Smooth manufactured solution (Accuracy test)}

We assess the spatial accuracy of the proposed HDG scheme using a smooth manufactured solution on
$\Omega=(0,2\pi)$ with parameters $\beta=0.5$ and $\gamma=1$.
We prescribe the exact solution
\[
u(x,t)=e^{-t}\,\sin(x),
\qquad t\in[0,T],\quad T=0.5.
\]
The source term $g(x,t)$ is chosen so that $(u,v)$ satisfies the Ostrovsky system \eqref{eq:ostrovsky}.
We impose boundary data consistent with the manufactured solution.

For time integration we use the $\theta$-scheme with $\theta=\tfrac12$ (Crank--Nicolson) and $\theta=1$ (backward Euler implicit) with  fixed time steps
$\Delta t=0.001$ and $\Delta t=0.1h^{k+1}$ respectively, chosen sufficiently small so that the temporal error does not dominate the spatial error on the meshes considered. For each polynomial degree $k\in\{1,2,3\}$ we compute the numerical solution at $T=0.5$.
Although Theorem~\ref{thm:error} guarantees an $O(h^{k+1/2})$ bound in $L^2(\Omega)$, the observed rates in
Table~\ref{tab:smooth_conv_merged} are closer to $k+1$.
This behavior is consistent with the commonly observed superconvergent behavior of hybridized methods in practice.

\begin{table}[t]
\centering
\renewcommand{\arraystretch}{1.15}
\setlength{\tabcolsep}{6pt}
\begin{tabular}{|c|cc|cc|cc|}
\hline
 & \multicolumn{2}{c|}{$k=1$} & \multicolumn{2}{c|}{$k=2$} & \multicolumn{2}{c|}{$k=3$} \\
\cline{2-7}
$N_e$ & error & rate & error & rate & error & rate \\
\hline
\hline
\multicolumn{7}{|c|}{\textbf{$\theta=\tfrac12$ (Crank--Nicolson), $\Delta t= 0.001$}}\\
\hline
  2  & $8.022\times10^{-1}$ & --    & $6.661\times10^{-2}$ & --    & $3.427\times10^{-2}$ & --    \\
  4  & $1.355\times10^{-1}$ & 2.565 & $1.671\times10^{-2}$ & 1.995 & $1.406\times10^{-3}$ & 4.607 \\
  8  & $3.013\times10^{-2}$ & 2.169 & $1.765\times10^{-3}$ & 3.243 & $8.572\times10^{-5}$ & 4.036 \\
 16  & $6.907\times10^{-3}$ & 2.125 & $2.152\times10^{-4}$ & 3.036 & $5.340\times10^{-6}$ & 4.005 \\
 32  & $1.670\times10^{-3}$ & 2.048 & $2.680\times10^{-5}$ & 3.006 & $3.658\times10^{-7}$ & 3.868 \\
\hline
\multicolumn{7}{|c|}{\textbf{$\theta=1$ (backward Euler), $\Delta t=0.1\,h^{k+1}$}}\\
\hline
  2  & $7.375\times10^{-1}$ & --    & $5.471\times10^{-2}$ & --    & $1.083\times10^{-1}$ & --    \\
  4  & $1.293\times10^{-1}$ & 2.512 & $1.784\times10^{-2}$ & 1.617 & $1.740\times10^{-2}$ & 2.638 \\
  8  & $2.920\times10^{-2}$ & 2.147 & $2.270\times10^{-3}$ & 2.975 & $1.237\times10^{-3}$ & 3.814 \\
 16  & $6.788\times10^{-3}$ & 2.105 & $2.895\times10^{-4}$ & 2.971 & $7.787\times10^{-5}$ & 3.990 \\
\hline
\end{tabular}
\caption{Convergence test at $T=0.5$ with $(\alpha,\beta,\gamma=(1,0.5,1)$.
We report the $L^2$ error in $u$ and the observed rate for polynomial degrees $k=1,2,3$.}
\label{tab:smooth_conv_merged}
\end{table}

\subsection*{Example 6.2. Periodic solitary-wave propagation (numerical profile)}

We test whether the proposed HDG scheme \eqref{eq:hdg-local}--\eqref{eq:global-transmission}
propagates a localized solitary-wave profile on a large periodic domain without spurious
distortion.

We consider the Ostrovsky equation \eqref{eq:ostrovsky} in the mixed first-order form
\eqref{eq:first-order} on the periodic interval $\Omega=(0,L)$ with $L\gg 1$.
Since $v_x=u$ determines $v$ only up to an additive constant, we fix the gauge by
prescribing the mean-zero compatibility
\begin{equation}\label{eq:mean_zero_soliton}
\int_0^L u(x,t)\,dx=0,
\end{equation}
so that the periodic anti--derivative (generalized inverse) $\partial_x^{-1}$ is well-defined on
mean-zero functions and $v$ is uniquely determined once a reference value (e.g.\ $v(0,t)=0$) is chosen.

A solitary traveling wave is a coherent structure of the form
\begin{equation}\label{eq:tw_ansatz_soliton}
u(x,t)=U(\xi),\qquad \xi=x-c_w t,
\end{equation}
where $U$ is localized on $\mathbb{R}$ and, on a sufficiently large periodic box, is exponentially small
near $x=0$ and $x=L$.
From \eqref{eq:first-order} we have $q=U'$, $p=\beta U''$, and $v_x=u$ implies
$v(\xi)=\partial_\xi^{-1}U(\xi)$, interpreted as the mean--zero periodic anti--derivative
consistent with \eqref{eq:mean_zero_soliton}.
Substituting \eqref{eq:tw_ansatz_soliton} into the last equation of \eqref{eq:first-order} yields
\begin{equation}\label{eq:tw_before_integrate_soliton}
-c_w U' - \beta U''' + \bigl(f(U)\bigr)' - \gamma\, \partial_\xi^{-1}U = 0 .
\end{equation}
Integrating once in $\xi$ and using the mean-zero periodic convention (so the integration constant is zero)
gives the stationary profile equation
\begin{equation}\label{eq:tw_profile_equation_soliton}
-c_w U - \beta U'' + f(U) - \gamma\, \partial_\xi^{-2}U = 0,
\qquad \widehat{U}(0)=0,
\end{equation}
where $\partial_\xi^{-2}$ denotes the periodic inverse of $\partial_\xi^2$ acting on mean-zero functions.

Let $\widehat{U}(\kappa)$ denote the Fourier coefficients on $(0,L)$ with wave numbers
$\kappa=\frac{2\pi}{L}m$, $m\in\mathbb{Z}$.
Taking Fourier transforms in \eqref{eq:tw_profile_equation_soliton} yields, for $\kappa\neq 0$,
\begin{equation}\label{eq:fourier_profile_soliton}
\Bigl(\beta \kappa^2 - c_w + \frac{\gamma}{\kappa^2}\Bigr)\widehat{U}(\kappa)
+ \widehat{f(U)}(\kappa)=0,\qquad \widehat{U}(0)=0.
\end{equation}
Thus the nonzero modes satisfy the nonlinear fixed-point map
\begin{equation}\label{eq:fixed_point_map_soliton}
\widehat{U}(\kappa)=
-\frac{\widehat{f(U)}(\kappa)}{\beta \kappa^2 - c_w + \gamma/\kappa^2},
\qquad \kappa\neq 0,
\quad \widehat{U}(0)=0.
\end{equation}
In our computations we take $f(u)=\frac{\alpha}{2}u^2$ and construct a numerical profile by a
stabilized (Petviashvili-type) fixed-point iteration \cite{duran2018numerical}.
Given an iterate $U^{(n)}$, we compute $F^{(n)}=\frac{\alpha}{2}(U^{(n)})^2$, evaluate its Fourier
coefficients $\widehat{F^{(n)}}$ by FFT, and update for $\kappa\neq 0$ by
\begin{equation}\label{eq:petviashvili_update_soliton}
\widehat{U}^{(n+1)}(\kappa)
=
-\,M_n^{\,p}\,
\frac{\widehat{F^{(n)}}(\kappa)}{\beta \kappa^2 - c_w + \gamma/\kappa^2},
\qquad
\widehat{U}^{(n+1)}(0)=0,
\end{equation}
where $M_n$ is an amplitude-correction factor and $p>0$ is an exponent suitable for quadratic nonlinearities. 
We apply mild relaxation
$U^{(n+1)}\leftarrow (1-\omega)U^{(n)}+\omega U^{(n+1)}$ with $\omega\in(0,1]$, and enforce the
mean-zero constraint by subtracting the spatial average after each iterate. For the quadratic nonlinearity we take the Petviashvili exponent $p=2$ and use relaxation $\omega=0.8$.
We iterate until the residual of \eqref{eq:fourier_profile_soliton} is below a prescribed tolerance.
On a large box, the resulting periodic profile satisfies $|U(0)|\approx |U(L)|\ll 1$, i.e.\ the tails are
negligible at the endpoints.

We take the computed profile as initial data $u_0(x)=U(x-x_0)$ (shifted so the peak is away from the
boundaries) and initialize $q_0=u_{0x}$, $p_0=\beta u_{0xx}$, and $v_0=\partial_x^{-1}u_0$ using the same
periodic mean-zero convention \eqref{eq:mean_zero_soliton}. 

To assess shape preservation, we compare the HDG solution $u_h(x,t)$ with the traveling--wave reference
obtained by periodic translation of the numerical profile:
\begin{equation}\label{eq:travelling_reference}
u_{\mathrm{ref}}(x,t) := U(x-c_w t).
\end{equation}
This $u_{\mathrm{ref}}$ is not an exact solution of the full PDE on a finite periodic box, but it is the
appropriate benchmark for short-time propagation.

In this experiment we take $\alpha=2$, $\beta=1$, $\gamma=\tfrac14$, $L=80$ (periodic),
and wave speed $c_w=-0.75$ (which lies in the regime $c_w<2\sqrt{\beta\gamma}$ considered in the reference
setup). We compute the numerical profile on a Fourier grid of $K=512$ points and then evolve the HDG scheme on
a mesh of $N_e$ elements with polynomial degree $k$ and a Crank--Nicolson time discretization
($\theta=\tfrac12$).

Figure~\ref{fig:ov_soliton} shows snapshots of $u_h(x,t)$ at times $t=0,5,10,15,20$, together with the
translated reference profile $u_{\mathrm{ref}}(x,T)$ at the final time.
We observe that the wave translates to the right/left according to the sign of $c_w$ and remains localized,
with only mild dispersive radiation, indicating that the HDG discretization captures coherent travelling-wave
dynamics of the Ostrovsky model.

\begin{figure}[t]
  \centering
  \includegraphics[width=0.92\linewidth]{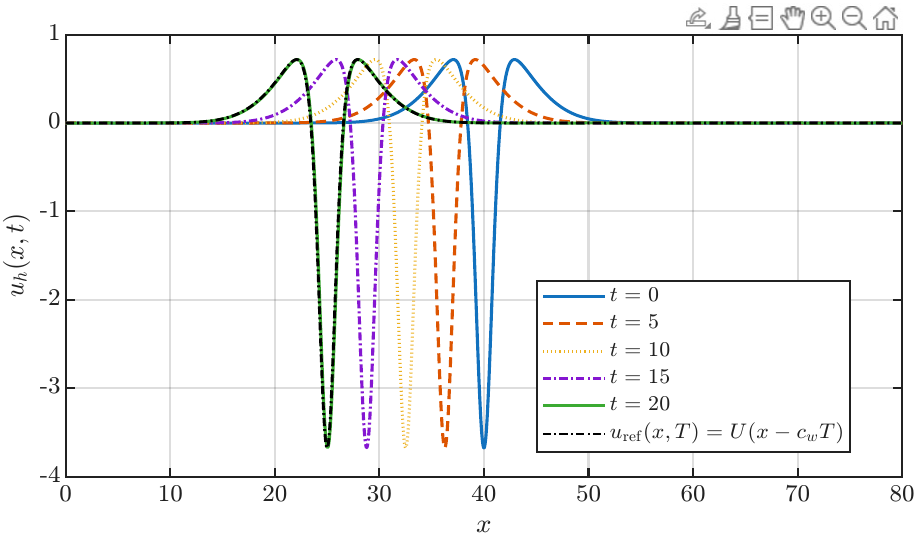}
  \caption{Solitary-wave propagation with $N_e=256$, $k=2$, $\Delta t=0.05$, $T=20$.
Shown are $u_h(\cdot,t)$ at $t=0,5,10,15,20$ and the translated reference $u_{\rm ref}(\cdot,T)=U(\cdot-c_wT)$.
}
  \label{fig:ov_soliton}
\end{figure}

Solitary waves in the Ostrovsky model \eqref{eq:ostrovsky} represent long, weakly nonlinear dispersive waves in a rotating medium
(e.g.\ internal waves under the Coriolis effect). They arise from a balance of nonlinearity and dispersion
modified by rotation, producing a coherent structure that propagates over long times with a nearly unchanged
shape. The present experiment verifies that the HDG discretization reproduces this qualitative behaviour and
provides a practical benchmark for
tracking numerical dispersion and dissipation \cite{coclite2020solutions,duran2018numerical,ostrovsky1978nonlinear}.


\subsection*{Example 6.3. Peakon solution and the singular limit}

We illustrate a non-smooth benchmark and the singular regime $\beta\to 0$, where the Ostrovsky equation \eqref{eq:ostrovsky}
converges to the Ostrovsky--Hunter (OH) equation; see, e.g., \cite{coclite2014convergence}.

Following  \cite[Example 4.3]{zhang2020discontinuous}, we take the ``peakon''
\begin{equation}\label{eq:peakon_u0}
u_0(x)=
\begin{cases}
\displaystyle \frac16\Big(x-\frac12\Big)^2+\frac16\Big(x-\frac12\Big)+\frac1{36}, & x\in\Big[0,\frac12\Big],\\[2mm]
\displaystyle \frac16\Big(x-\frac12\Big)^2-\frac16\Big(x-\frac12\Big)+\frac1{36}, & x\in\Big[\frac12,1\Big],
\end{cases}
\qquad u_0(x+1)=u_0(x).
\end{equation}
For the OH case ($\beta=0$), this yields the explicit traveling-wave solution
\begin{equation}\label{eq:peakon_exact}
u(x,t)=u_0\!\left(x-\frac{t}{36}\right),
\end{equation}
so the profile translates at speed $c=\tfrac1{36}$ and returns to $u_0$ at times $T\in 36\mathbb{N}$.
To initialize the mixed variables in \eqref{eq:first-order} we set
$q(\cdot,0)=\partial_x u_0$, $v(\cdot,0)=\int_0^x u_0(s)\,ds$ (hence $v(0,0)=0$ and this is the periodic mean-zero antiderivative gauge), and $p(\cdot,0)=\beta\,\partial_x q(\cdot,0)$.

The Ostrovsky equation models weakly nonlinear long waves in a rotating fluid; the term involving $\gamma$
encodes the large-scale restoring (Coriolis) effect, while $\beta$ controls dispersive regularization \cite{coclite2014convergence,coclite2015dispersive}.
The peakon-type profile \eqref{eq:peakon_u0} represents a coherent structure with a sharp corner
(discontinuous slope), making it a demanding benchmark for high-order methods.

We fix $\alpha=1$ and $\gamma=1$
and compute periodic HDG solutions for a sequence $\beta\in\{0,\beta_1,\beta_2,\dots\}$ with $\beta_j\to 0$.
For $\beta=0$ we validate against the explicit wave \eqref{eq:peakon_exact}. For $\beta>0$ we quantify the
singular-limit behavior by comparing the OV solution to the OH reference profile at the same final time $T=2$. We used polynomial degree $k=2$, elements $N_e=32$, time step $\Delta t=0.005$, $\theta=\frac{1}{2}$, and $\beta\in\{0, 10^{-4}, 10^{-5},10^{-6}\}$.

Figure~\ref{fig:peakon_ov_to_oh} shows the initial profile $u_0$, the OH reference profile at time $T$,
and the HDG approximations for several values of $\beta$.
As $\beta$ decreases, the numerical Ostrovsky curves approach the OH reference, indicating that the proposed HDG method captures the singular limit while accurately transporting a corner-type coherent structure.

\begin{figure}[t]
\centering
\includegraphics[width=0.85\linewidth]{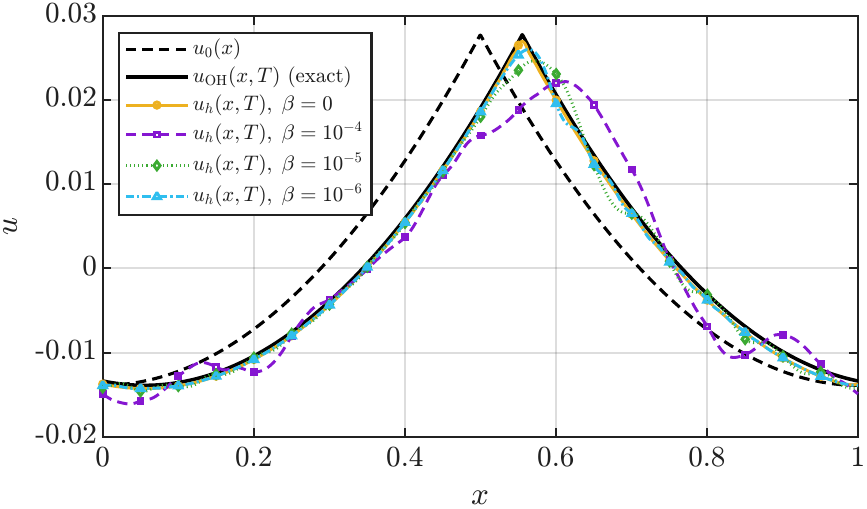}
\caption{Peakon solution and the limit $\beta\to0$.
Shown are the initial condition $u_0$ from \eqref{eq:peakon_u0}, the OH reference profile $u_{\mathrm{OH}}(\cdot,T)$
given by \eqref{eq:peakon_exact}, and periodic HDG solutions $u_h(\cdot,T)$
for several values of $\beta$.}
\label{fig:peakon_ov_to_oh}
\end{figure}

\section{Conclusion}\label{sec:conclusion}
We developed a HDG method for the Ostrovsky equation on a bounded interval by localizing the nonlocal term through the auxiliary variable $v$ defined by $v_x=u$ with a fixed boundary constraint to ensure uniqueness.
The resulting formulation admits elementwise elimination of interior unknowns and a global system posed only in terms of numerical traces.
For $\beta\neq 0$ and $\gamma>0$ (under the stated stabilization conditions), we established an $L^2$-stability estimate and proved an {\it a priori} $L^2$-error bound for the primary variable $u$.
Numerical experiments confirm the predicted convergence for smooth solutions and demonstrate that the method remains accurate and stable in challenging regimes, including near non-smooth peaked profiles for reduced OH model whenever $\beta \to 0$.
 The present HDG design extends naturally to KP-type models \cite{KadomtsevPetviashvili1970} in two space dimensions in which the transverse term appears as $\partial_x^{-1}u_{yy}$:
one introduces auxiliary variables for $y$-derivatives (e.g.\ $w=u_y$, $r=w_y$) and enforces the inverse-$x$ operator through a mixed constraint $v_x=r$ together with an appropriate gauge condition.

\section*{Declarations}

\subsection*{Data availability}
All numerical experiments can be reproduced from the implementation used in this work. The data supporting the findings of this study are available in the accompanying public repository \cite{rupp2026hdgostrovsky}. 

\subsection*{Competing interests}
The authors declare that they have no competing interests.

\subsection*{Funding}
This work has been supported by the Deutsche Forschungsgemeinschaft (DFG, German
Research Foundation) -- 577175348.

\bibliographystyle{abbrv}
\bibliography{main_ov}

\end{document}